\documentclass[12pt]{amsart}
\usepackage{amsmath,amssymb,amsfonts,amsthm}
\usepackage{mathtools}
\usepackage{esint}
\usepackage{hyperref}
\hypersetup{colorlinks=true,linkcolor=blue,citecolor=blue,urlcolor=blue}

\theoremstyle{plain}
\newtheorem{theorem}{Theorem}[section]
\newtheorem{lemma}[theorem]{Lemma}
\newtheorem{corollary}[theorem]{Corollary}
\newtheorem{proposition}[theorem]{Proposition}
\theoremstyle{remark}
\newtheorem{remark}[theorem]{Remark}
\numberwithin{equation}{section}

\newcommand{\dd}{\,\mathrm d}
\newcommand{\1}{\mathbf 1}
\DeclareMathOperator{\pv}{p.v.}

\newcommand{\Sn}{\mathbb{S}^{n-1}}

\newcommand{\Mor}[2]{\dot M^{#1,#2}}

\begin{document}

\title[Sparse pointwise bounds]{Sparse pointwise bounds for maximal truncations of
rough singular integrals and Sobolev-type inequalities}

\author{Diego Chamorro}

\address{Laboratoire de Math\'ematiques et Mod\'elisation d'Evry (LaMME) - UMR 8071. Universit\'e d'Evry Val d'Essonne, Evry Cedex, France}
\email{diego.chamorro@univ-evry.fr}

\author{Anca-Nicoleta Marcoci}
\address{Department of Mathematics and Computer Science, Technical University of Civil Engineering Bucharest, Romania}
\email{anca.marcoci@utcb.ro}

\author{Liviu-Gabriel Marcoci}
\address{Department of Mathematics and Computer Science, Technical University of Civil Engineering Bucharest, Romania}
\email{liviu.marcoci@utcb.ro}

\subjclass[2020]{Primary 42B20; Secondary 42B25, 46E30, 46E35}
\keywords{Rough singular integrals; maximal truncations; sparse domination; Riesz potentials; Sobolev inequalities; Muckenhoupt weights; Orlicz spaces; variable Lebesgue spaces; grand Lebesgue spaces; Morrey spaces}

\begin{abstract}
Let $1<\rho<n$ and let $\Omega\in L^\rho(\mathbb S^{n-1})$ have vanishing
mean. We prove that the maximal truncation $T^\ast_\Omega$ of the rough
singular integral $T_\Omega$ is dominated pointwise by finitely many sparse
potentials
\[
\sum_{Q\in\mathcal S}
\ell(Q)\Bigl(\fint_Q|\nabla f|^p\Bigr)^{1/p}\1_Q,
\qquad
\widetilde\rho\le p<n,
\]
where
\[
\frac1{\widetilde\rho}
=
\frac1{\rho'}+\frac1n.
\]
The estimate is obtained uniformly in the truncation parameter and extends
to $T^\ast_\Omega$ the subcritical bound of Hoang, Moen and P\'erez for
$T_\Omega$. Since the argument does not require the boundedness of
$T^\ast_\Omega$ on the target space, it yields two-weight Sobolev
inequalities
\[
\|T^\ast_\Omega f\|_{L^q(u)}
\lesssim
\|\nabla f\|_{L^p(v)}
\]
under joint two-weight conditions, while the target weight $u$ itself is
only required to belong to $A_\infty$. We also prove a Hedberg-type
estimate involving a Morrey norm of the gradient and apply it to weighted
grand Lebesgue spaces. Further consequences are obtained in weighted
Lebesgue, Orlicz and variable Lebesgue spaces.
\end{abstract}

\maketitle
\section{Introduction}

Let $n\ge 2$ and let us consider $\Omega:\Sn\to\mathbb R$ satisfying the following conditions
\begin{equation}\label{eq:Omega-conditions}
\Omega\in L^\rho(\Sn)
\qquad\text{for some } 1<\rho<n,
\qquad\text{and}\qquad
\int_{\Sn}\Omega\dd\sigma=0.
\end{equation}
We also consider the rough homogeneous singular integral operator $T_\Omega f$ defined by
\[
T_\Omega f(x)
:=
\pv\int_{\mathbb R^n}
\frac{\Omega(y/|y|)}{|y|^{n}}
\,f(x-y)\dd y,
\qquad f\in C_c^\infty(\mathbb R^n),
\]
and its maximal truncation operator
\[
T_\Omega^\ast f(x)
:=
\sup_{\varepsilon>0}
\left|
\int_{|y|>\varepsilon}
\frac{\Omega(y/|y|)}{|y|^{n}}
\,f(x-y)\dd y
\right|.
\]
Note that for $f\in C_c^\infty(\mathbb R^n)$ the principal value exists at every point, and $|T_\Omega f(x)|\le T^\ast_\Omega f(x)$.

The study of $T_\Omega$ under minimal regularity of $\Omega$ goes back to the work of Calder\'on and Zygmund \cite{CZ1956}, where $L^p$ boundedness, $1<p<\infty$, was proved under the assumption $\Omega\in L\log L(\Sn)$ together with the cancellation condition in \eqref{eq:Omega-conditions}. Weak-type $(1,1)$ bounds in dimension two were obtained by Christ and
Rubio de Francia \cite{ChristRdF} under the assumption
$\Omega\in L\log L(\Sn)$, while Hofmann \cite{Hofmann1988} obtained
the corresponding two-dimensional result under the stronger assumption
$\Omega\in L^q(\Sn)$, $q>1$. The $L\log L$ result in all dimensions
was proved by Seeger \cite{Seeger1996}. The Lebesgue range $\Omega\in L^\rho(\Sn)$, $1<\rho<\infty$, which contains the setting considered in the present paper, was treated systematically by Duoandikoetxea and Rubio de Francia \cite{DRdF1986} via Fourier transform estimates.

Concerning weighted estimates, Duoandikoetxea \cite{Duoandikoetxea1993} and Watson \cite{Watson1990} obtained weighted $L^p$ inequalities for $T_\Omega$ with Muckenhoupt  weights. The quantitative theory has advanced considerably in the last decade through the sparse domination paradigm. Lerner obtained the control of Calder\'on--Zygmund operators in norm by sparse averaging operators \cite{Lerner2013} and then pointwise estimate \cite{Lerner2016}. Alternative approaches were developed by Conde-Alonso and Rey \cite{CondeAlonsoRey}, Lacey \cite{LaceySparse} and Lerner--Ombrosi \cite{LernerOmbrosi}. 

Quantitative weighted estimates for rough homogeneous singular integrals
with bounded angular part were obtained by Hyt\"onen, Roncal and Tapiola
\cite{HytonenRoughSparse}. Conde-Alonso, Culiuc, Di Plinio and Ou
\cite{CondeAlonsoCDO} subsequently established bilinear sparse domination
for rough singular integrals, including unbounded angular kernels, and
obtained quantitative weighted estimates extending the bounded-kernel
theory. Further weighted estimates, including results for $L^q$ angular
kernels, were developed by Li, P\'erez, Rivera-R\'ios and Roncal
\cite{LiPerezRiveraRios}.

Pointwise sparse domination for commutators of Calder\'on--Zygmund
operators was obtained by Lerner, Ombrosi and Rivera-R\'ios
\cite{LernerOmbrosiRiveraRios2017}; see also
\cite{PerezRiveraRios} for limitations of sparse domination with
$L\log L$ averages. Sparse bounds for maximally truncated rough singular integrals with
$\Omega\in L^\infty(\Sn)$ were obtained by Di Plinio, Hyt\"onen and Li
\cite{DiPlinioHytonenLi}. Very recently, Wu \cite{Wu2026} extended the maximal sparse theory to
unbounded angular kernels by obtaining quantitative asymmetric sparse
bounds for $T_\Omega^\ast$. These are sparse-form estimates of the usual type and are different from
the gradient-sparse Sobolev domination considered below.

The pointwise subrepresentation formula
\[
|f(x)|\le c_n\, I_1(|\nabla f|)(x),
\qquad f\in C_c^\infty(\mathbb R^n),
\]
where $I_1$ denotes the Riesz potential of order one, has been used as a unifying device reducing Sobolev-type estimates to mapping properties of $I_1$;
see, for instance, the monograph of Kinnunen, Lehrb\"ack and
V\"ah\"akangas \cite{Kinnunen} and the references therein. Recently, Hoang, Moen and P\'erez \cite{HMP_JAM} extended to operator versions of this principle: in the critical case $\Omega\in L^{n,\infty}(\Sn)$ they proved the pointwise bound
\begin{equation}\label{eq:HMP-critical}
|T^*_\Omega f(x)|
\le c_n\,\|\Omega\|_{L^{n,\infty}(\Sn)}\, I_1(|\nabla f|)(x),
\qquad f\in C_c^\infty(\mathbb R^n),
\end{equation}
from which Sobolev inequalities with $T_\Omega^\ast$ (and hence
$T_\Omega$) on the left-hand side follow at once from the classical mapping properties of $I_1$; see also \cite{HMP_LaMat} for a broader perspective on subrepresentation formulas.

Below the critical integrability, that is for
$\Omega\in L^\rho(\Sn)$ with $1<\rho<n$, Hoang, Moen and P\'erez obtain
instead pointwise bounds by sparse fractional integral operators. Dyadic models of $I_\alpha$ go back to Sawyer and Wheeden
\cite{SawyerWheeden1992}; the dyadic and sparse reductions used here
were further developed by Cruz-Uribe and Moen
\cite{Cruz-UribeMoen}. See also Bernicot, Frey and Petermichl \cite{BernicotFreyPetermichl} for sparse domination beyond the Calder\'on--Zygmund setting and Cascante--Ortega \cite{CascanteOrtega} for two-weight vector-valued estimates. In \cite{HMP_funct_anal}, Hoang, Moen and P\'erez considered the $L^s$-refined sparse potentials
\[
I^{\mathcal S}_{\alpha,L^s}f
:=\sum_{Q\in\mathcal S}\ell(Q)^{\alpha}\Bigl(\fint_Q|f|^s\Bigr)^{1/s}\1_Q,
\]
which may be viewed as intermediate between the sparse averaging operators of the $L^p$ theory and the Riesz potential, and proved that a full scale of rough fractional operators $T_{\Omega,\alpha}$ including $T_\Omega$ itself is dominated pointwise by finitely many operators $I^{\mathcal S}_{\alpha,L^{s}}(|\nabla f|)$. For $T_\Omega$ with $\Omega\in L^{\rho}(\Sn)$, $1<\rho<n$, Hoang, Moen and P\'erez prove such a bound in
\cite[Theorem~1.4]{HMP_funct_anal} with the integrability parameter
\begin{equation}\label{eq:rho-tilde-def}
\widetilde\rho:=\frac{\rho n}{\rho n+\rho-n},
\end{equation}
so that $1<\widetilde\rho<n$ is the exponent whose critical Sobolev conjugate equals $\rho'$; in fact \cite{HMP_funct_anal} allows $\Omega$ in the larger Lorentz class $L^{\rho,\rho^\ast}(\Sn)\supseteq L^\rho(\Sn)$, where $\rho^\ast=n\rho/(n-\rho)$. For $\rho\ge n$ one has $L^\rho(\Sn)\subseteq L^{n,\infty}(\Sn)$ and the critical bound \eqref{eq:HMP-critical} applies, so the range $1<\rho<n$ is the natural one here.

The aim of the present paper is to extend this pointwise control to the maximal truncation. Our main result (Theorem~\ref{thm:main}) shows that for every $p$ with $\widetilde\rho\le p<n$ and every $f\in C_c^\infty(\mathbb R^n)$ there exist at most $2^n$ sparse families $\mathcal S_t$, contained in shifted dyadic grids and depending on $|\nabla f|$, such that
\begin{equation}\label{eq:intro-main}
T^\ast_\Omega f(x)
\le
c(n,p,\rho)\,\|\Omega\|_{L^\rho(\Sn)}
\sum_{t}I^{\mathcal S_t}_{1,L^p}(|\nabla f|)(x),
\qquad x\in\mathbb R^n,
\end{equation}
with a constant uniform over all truncations. Since $|T_\Omega f|\le T^\ast_\Omega f$, the case $p=\widetilde\rho$ of \eqref{eq:intro-main} recovers, for $\Omega\in L^\rho(\Sn)$, the bound of \cite[Theorem~1.4]{HMP_funct_anal}; the new content is the subcritical gradient-sparse control of the
maximal truncation, which requires estimating the truncated annulus uniformly in the truncation parameter. This uniformity allows the subcritical applications below to be stated for $T^\ast_\Omega$ rather than only for $T_\Omega$. We further establish a Hedberg-type refinement (Proposition~\ref{prop:hedberg} and Corollary~\ref{cor:hedberg-T}) in which the sparse sum in \eqref{eq:intro-main} is replaced by a localized supremum of $L^p$-averages of $|\nabla f|$ and a homogeneous Morrey seminorm; this is particularly useful when Morrey information on the gradient is available. The corresponding sparse-free estimate, with the Hardy--Littlewood
maximal function in place of the localized supremum, was stated in
\cite[Theorem~1]{CMM_JMAA}. A corrected proof is given in
\cite{CMM_corrigendum}; the corrected argument also covers the maximal
truncation $T_\Omega^\ast$. The estimate was used in
\cite{CMM_JMAA} to derive refined Sobolev inequalities on the Lebesgue,
weighted Lebesgue, Orlicz and classical Lorentz scales. The present sparse estimate gives a finer pointwise control: Corollary~\ref{cor:hedberg-T} recovers \cite[Theorem~1]{CMM_JMAA} upon enlarging the localized supremum to the maximal function and, more importantly, the sparse bound \eqref{eq:intro-main} is a finer pointwise control, which gives access to the off-diagonal, vector-valued and two-weight Sobolev inequalities of Sections~\ref{sec:weighted}--\ref{sec:variable}, which can be accessed directly through the sparse-operator theory but
are not encoded in the maximal-function estimate alone.
The pointwise estimate \eqref{eq:intro-main} then fit into the weighted theory of the dominating sparse operators developed in \cite{CMP,Cruz-UribeMoen,HMP_funct_anal}. We also derive the consequences for $T^\ast_\Omega$: a one-weight Sobolev inequality on the off-diagonal scale $\frac1p-\frac1q=\frac1n$, its vector-valued version, a mixed two-weight inequality under a joint Muckenhoupt-type condition
and $A_\infty$ hypotheses, and a two-weight inequality under Sawyer testing conditions. For the untruncated operator, the sparse domination of
\cite{HMP_funct_anal}, combined with the corresponding sparse-operator
estimates, yields analogous weighted consequences; here we formulate
and prove them for $T_\Omega^\ast$. Finally, we establish a modular domination of $T^\ast_\Omega$ by the fractional maximal operator $M_{1,L^p}$ in weighted Orlicz spaces, and we obtain variable Lebesgue counterparts. On the grand Lebesgue spaces, the Hedberg-type refinement, combined with the maximal-function theory
of grand Lebesgue spaces on $\mathbb R^n$
\cite{JMSV,SamkoUmarkhadzhiev2016,Umarkhadzhiev2014},
gives a weighted Sobolev-type inequality (Theorem~\ref{thm:grand}).

The paper is organized as follows. In Section~\ref{sec:preliminaries} we collect the dyadic, sparse and weighted preliminaries. Section~\ref{sec:main} contains the main pointwise theorems. In Section~\ref{sec:weighted} we derive the one-weight, vector-valued and two-weight Sobolev inequalities for $T^\ast_\Omega$. Section~\ref{sec:orlicz} establishes a modular domination in weighted
Orlicz spaces. Section~\ref{sec:variable} treats the variable Lebesgue counterparts, and finally, Section~\ref{sec:grand} presents an application of the Hedberg-type refinement in weighted grand Lebesgue spaces.

\section{Preliminaries}\label{sec:preliminaries}

In this section we present the dyadic, sparse and weighted machinery on which
the rest of the paper relies on. The notation and conventions follow
\cite{Cruz-UribeMoen,HMP_funct_anal,Lerner2013}. Throughout this paper, $Q$ denotes a
cube in $\mathbb R^n$ with sides parallel to the axes, $\ell(Q)$ its side
length, and $\fint_Q f=\frac1{|Q|}\int_Q f$, the integral average. The constants that appear in this paper
 may change from line to line and their dependence is indicated in
parentheses.

\subsection*{Dyadic grids}
The \emph{dyadic grid} $\mathcal D$ is a collection of cubes 
such that:
\begin{enumerate}  
\item each cube from the collection had its length $2^k$ for some $k\in \mathbb{Z}$;\\
\item  for any two cubes $P$ and $Q$ from $\mathcal D$ we have $P\cap Q\in \{\emptyset, P, Q\}$;\\
\item   for each $k\in \mathbb{Z}$ the set $\mathcal D_k=\{Q\in \mathcal D:\:\ell(Q)=2^k\}$ form a partition of $\mathbb{R}^n$. 
\end{enumerate}

 We will use the shifted grids
\[
\mathcal D^t:=\bigl\{2^{k}\bigl([0,1)^n+m+(-1)^k t\bigr):\ k\in\mathbb Z,\ m\in\mathbb Z^n\bigr\},
\qquad t\in\{0,\tfrac13\}^n.
\]
 An important result is the following lemma.

\begin{lemma}\label{lem:three-lattice}
For every cube $Q\subset\mathbb R^n$ there exist $t\in\{0,\frac13\}^n$ and $Q'\in\mathcal D^t$ such that $Q\subseteq Q'$ and $\ell(Q')\le 6\,\ell(Q)$.
\end{lemma}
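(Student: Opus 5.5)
The plan is to reduce to one dimension and apply the classical ``one-third trick''. All coordinates will use one common scale $k\in\mathbb Z$, chosen as the unique integer with
\[
3\ell(Q)\le 2^k<6\ell(Q).
\]
Then, for each coordinate direction $i$ separately, we choose $t_i\in\{0,\frac13\}$ so that the $i$-th projection of $Q$ lies inside a single dyadic interval of length $2^k$ in the one-dimensional grid with shift $t_i$. Since the grid $\mathcal D^t$ at scale $k$ is the product of the one-dimensional grids $\{2^k([0,1)+m+(-1)^k t_i)\}_{m\in\mathbb Z}$, the product of these intervals is a cube $Q'\in\mathcal D^t_k$, where $t=(t_1,\dots,t_n)$. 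It contains $Q$ and satisfies $\ell(Q')=2^k<6\ell(Q)$.

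\textbf{One-dimensional step.} Let $J$ be the projection of $Q$ onto a coordinate axis, an interval with endpoints $a$ and $a+\ell$ (open, closed or half-open; it does not matter). At scale $k$ the two candidate grids have endpoint sets
\[
E_0=2^k\mathbb Z,\qquad E_{1/3}=2^k\bigl(\mathbb Z+(-1)^k\tfrac13\bigr).
\]
$J$ fails to lie in a single half-open interval $[x,x+2^k)$ of the grid with endpoint set $E$ exactly when $E$ contains a point of $(a,a+\ell]$. Suppose this happens for both grids. Then there are $x\in E_0$ and $y\in E_{1/3}$, both in $(a,a+\ell]$, so $|x-y|<\ell\le 2^k/3$.

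On the other hand, every point of $E_0$ is at distance at least $2^k/3$ from every point of $E_{1/3}$, because $\mathbb Z$ and $\mathbb Z\pm\frac13$ are at mutual distance at least $\frac13$. This is a contradiction, so at least one choice of $t_i$ works.

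\textbf{Main point to check.} The only real point is the bookkeeping of the scale. The shift sign $(-1)^k$ depends only on $k$, so once the common scale is fixed the sign is the same in every coordinate and does not affect the distance argument. The choice of $k$ simultaneously gives $\ell\le 2^k/3$, needed for the separation argument, and $2^k<6\ell$, which yields the stated bound $\ell(Q')\le 6\ell(Q)$. No earlier result of the paper is needed.
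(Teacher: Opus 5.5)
Your proof is correct. The paper gives no proof of its own and only cites \cite[Lemma~2.1]{HMP_funct_anal} and \cite[Proposition~2.1]{Cruz-UribeMoen}; your argument is the standard coordinatewise one-third trick behind those references: fix the common scale $2^k\in[3\ell(Q),6\ell(Q))$ and use that $2^k\mathbb Z$ and $2^k(\mathbb Z\pm\frac13)$ are $2^k/3$-separated. The one imprecision is the phrase ``exactly when'' in your one-dimensional step, which can fail for a non-closed $J$ with $a+\ell\in E$; this is harmless, because you only use the true direction, namely that $E\cap(a,a+\ell]=\emptyset$ forces $[a,a+\ell]$, and hence $J$, into a single grid interval.
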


A proof can be found in \cite[Lemma~2.1]{HMP_funct_anal}; see also \cite{LernerNazarov} and \cite[Proposition~2.1]{Cruz-UribeMoen}.

\subsection{Sparse families}
A subcollection $\mathcal S$ of a dyadic grid $\mathcal D$ is called \emph{sparse} if for every $Q\in\mathcal S$ there is a measurable set $E_Q\subseteq Q$ such that $|Q|\le 2|E_Q|$ and the sets $\{E_Q\}_{Q\in\mathcal S}$ are pairwise disjoint. Sparseness holds, in particular, whenever the packing condition
\begin{equation}\label{eq:sparse-packing}
\Bigl|\bigcup_{\substack{Q'\in\mathcal S\\ Q'\subsetneq Q}}Q'\Bigr|\le \frac12\,|Q|
\qquad\text{for every }Q\in\mathcal S
\end{equation}
is satisfied: it suffices to take $E_Q:=Q\setminus\bigcup\{Q'\in\mathcal S:\ Q'\subsetneq Q\}$, and the disjointness of the $E_Q$ follows since dyadic cubes are nested or disjoint; this is the form in which sparse families typically arise from Calder\'on--Zygmund-type constructions, cf. \cite{Cruz-UribeMoen}.

Equivalently, up to the values of the constants, sparseness can be
formulated in terms of a Carleson packing condition; see
\cite[Section~6]{LernerNazarov}.
\subsection{Sparse fractional integral operators}\label{sec:sparse}
For $0<\alpha<n$, a dyadic grid $\mathcal D$ and a sparse family $\mathcal S\subseteq\mathcal D$, we shall use the dyadic and sparse fractional integral operators
\[
I^{\mathcal D}_{\alpha}f:=\sum_{Q\in\mathcal D}\ell(Q)^{\alpha}\Bigl(\fint_Q|f|\Bigr)\1_Q,
\qquad
I^{\mathcal S}_{\alpha}f:=\sum_{Q\in\mathcal S}\ell(Q)^{\alpha}\Bigl(\fint_Q|f|\Bigr)\1_Q,
\]
which model the Riesz potential
\[
I_\alpha f(x):=\int_{\mathbb R^n}\frac{f(y)}{|x-y|^{n-\alpha}}\dd y,
\]
together with their $L^s$-refinements, following Hoang, Moen and P\'erez \cite{HMP_funct_anal},
\begin{equation}\label{eq:sparse-Ls}
I^{\mathcal S}_{\alpha,L^s}f:=\sum_{Q\in\mathcal S}\ell(Q)^{\alpha}\Bigl(\fint_Q|f|^s\Bigr)^{1/s}\1_Q,
\qquad 1\le s<\frac n\alpha,
\end{equation}
and likewise for $I^{\mathcal D}_{\alpha,L^s}$. The restriction $s<n/\alpha$ is needed since, for $s\ge n/\alpha$,
the sparse sum may diverge even for bounded compactly supported functions;
see \cite[Section~1]{HMP_funct_anal}.
Two discretization results connect these objects with $I_\alpha$. The first goes back to Sawyer and Wheeden \cite{SawyerWheeden1992} and the dyadic comparison below follows from
\cite[Proposition~2.2]{Cruz-UribeMoen}; see also
\cite{SawyerWheeden1992} for the earlier dyadic model.

\begin{lemma}\label{lem:three-lattice-Ialpha}
Let $0<\alpha<n$ and let $f\ge0$ be locally integrable. Then, with $c=c(n,\alpha)$:
\begin{enumerate}
\item[(i)] for every dyadic grid $\mathcal D$ one has $I^{\mathcal D}_\alpha f(x)\le c\, I_\alpha f(x)$ for all $x\in\mathbb R^n$;
\item[(ii)] $\displaystyle I_\alpha f(x)\le c\sum_{t\in\{0,1/3\}^n} I^{\mathcal D^t}_\alpha f(x)$ for all $x\in\mathbb R^n$.
\end{enumerate}
\end{lemma}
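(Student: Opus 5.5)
Both parts reduce to comparing the kernel $|x-y|^{\alpha-n}$ with the quantities $\ell(Q)^{\alpha}|Q|^{-1}=\ell(Q)^{\alpha-n}$, using a dyadic decomposition of scales.

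For (i), fix $x$ and a dyadic grid $\mathcal D$. The cubes $Q\in\mathcal D$ containing $x$ form a chain $\{Q_k(x)\}_{k\in\mathbb Z}$ with $\ell(Q_k(x))=2^k$. Hence
\[
I^{\mathcal D}_\alpha f(x)=\sum_{k\in\mathbb Z}2^{k(\alpha-n)}\int_{Q_k(x)}f .
\]
For $y\in Q_k(x)$ we have $|x-y|\le\sqrt n\,2^k$, so $2^{k(\alpha-n)}\le n^{(n-\alpha)/2}|x-y|^{\alpha-n}2^{(k-j)(n-\alpha)}\cdot(\ldots)$. It is cleaner to interchange the sum and the integral:
\[
I^{\mathcal D}_\alpha f(x)=\int f(y)\sum_{k:\,y\in Q_k(x)}2^{k(\alpha-n)}\dd y .
\]
The condition $y\in Q_k(x)$ forces $2^k\ge |x-y|/\sqrt n$. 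Since $\alpha<n$, the geometric series over $k$ is dominated by its smallest term, so the inner sum is at most $c(n,\alpha)|x-y|^{\alpha-n}$. This proves (i) pointwise, with no restriction on $x$; monotone convergence justifies the interchange because $f\ge0$.

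For (ii), decompose
\[
I_\alpha f(x)=\sum_{k\in\mathbb Z}\int_{2^{k-1}<|x-y|\le 2^k}f(y)|x-y|^{\alpha-n}\dd y\le c\sum_k 2^{k(\alpha-n)}\int_{B(x,2^k)}f .
\]
The ball $B(x,2^k)$ lies in a cube $P_k$ of side $2^{k+1}$. By Lemma~\ref{lem:three-lattice}, there are $t_k\in\{0,\frac13\}^n$ and $Q_k\in\mathcal D^{t_k}$ with $P_k\subseteq Q_k$ and $\ell(Q_k)\le 6\cdot2^{k+1}$. Since $\ell(Q_k)\ge 2^{k+1}$, we get $2^{k(\alpha-n)}\le c\,\ell(Q_k)^{\alpha-n}$. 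Therefore each term is bounded by $c\,\ell(Q_k)^\alpha\fint_{Q_k}f\cdot\1_{Q_k}(x)$, and $x\in Q_k$ holds by construction.

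It then remains to sum over $k$. Group the $k$ according to the value of $t_k$. For fixed $t$, the map $k\mapsto Q_k$ is at most boundedly-many-to-one, because $\ell(Q_k)\in[2^{k+1},12\cdot2^k]$ and the dyadic side lengths of cubes in $\mathcal D^t$ are powers of $2$. So only $O(1)$ values of $k$ give the same side length, and since all the $Q_k$ contain $x$, those with the same side length are equal. This yields
\[
I_\alpha f(x)\le c\sum_t\sum_{Q\in\mathcal D^t}\ell(Q)^\alpha\fint_Q f\,\1_Q(x),
\]
which is (ii).

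The only delicate point is this bounded-multiplicity count, and it is immediate from the side-length comparison above. The whole argument is a routine discretization.
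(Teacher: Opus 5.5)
Your proof is correct. It is the standard argument behind \cite[Proposition~2.2]{Cruz-UribeMoen}, which the paper cites instead of proving the lemma. Your part (ii) is also the same discretization the paper carries out in the proof of Theorem~\ref{thm:main}; there one passes to the ancestor of side $2^{k+4}$ so that distinct $k$ give distinct cubes, where you instead count at most three values of $k$ per cube, which is equally valid.

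One slip: since $\alpha-n<0$, the bound $2^{k(\alpha-n)}\le c\,\ell(Q_k)^{\alpha-n}$ follows from $\ell(Q_k)\le 12\cdot 2^k$, not from $\ell(Q_k)\ge 2^{k+1}$; the lower bound is needed only for the multiplicity count. The abandoned sentence with $(\ldots)$ in part (i) should simply be deleted.
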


The second result refines the dyadic operators to sparse ones. Part (i) is \cite[Proposition~2.3]{Cruz-UribeMoen} and part (ii) is \cite[Lemma~2.4]{HMP_funct_anal}. 

We denote by $L_c^\infty(\mathbb R^n)$ the space of all bounded functions with compact support.

\begin{lemma}\label{lem:sparse}
Let $0<\alpha<n$ and let $\mathcal D$ be a dyadic grid.
\begin{enumerate}
\item[(i)] For every $f\in L^\infty_c(\mathbb R^n)$, $f\ge0$, there exists a sparse family $\mathcal S=\mathcal S(f)\subseteq\mathcal D$ such that $I^{\mathcal D}_\alpha f(x)\le c\, I^{\mathcal S}_\alpha f(x)$ for all $x$, with $c=c(n,\alpha)$.

\item[(ii)] Let $1\le s<n/\alpha$. For every $f\in L^\infty_c(\mathbb R^n)$, $f\ge0$, there exists a sparse family $\mathcal S=\mathcal S(f)\subseteq\mathcal D$ such that $I^{\mathcal D}_{\alpha,L^s} f(x)\le c\, I^{\mathcal S}_{\alpha,L^s} f(x)$ for all $x$, with $c=c(n,\alpha,s)$.
\end{enumerate}
\end{lemma}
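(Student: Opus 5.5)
The plan is the standard stopping-time (principal cubes) construction. I treat (i) and (ii) together, since (i) is the case $s=1$ of (ii), and write $\langle f\rangle_{s,Q}:=(\fint_Q f^s)^{1/s}$.

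\emph{Reduction to one top cube.} Because $f\in L^\infty_c$, the averages $\langle f\rangle_{s,Q}$ of large cubes decay like $|Q|^{-1/s}$. Since $s<n/\alpha$, the weights $\ell(Q)^\alpha\langle f\rangle_{s,Q}\lesssim \ell(Q)^{\alpha-n/s}$ decay geometrically along ancestor chains. In a single dyadic grid the support of $f$ need not lie in one cube, but it is covered by at most $2^n$ cubes of a fixed large size, each lying in its own quadrant-type chain. I handle each such chain, or each maximal cube, separately. The cubes of $\mathcal D$ that meet $\operatorname{supp} f$ and are larger than some $Q_0$ form a chain, and their total contribution at $x\in Q_0$ is a geometric series dominated by a constant times the term of the smallest cube in the chain. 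For $x$ outside all such cubes, the same comparison along the ancestors of $x$ shows the sum is controlled by the first relevant ancestor. Cubes not meeting the support contribute zero. So it suffices to dominate $\sum_{Q\subseteq Q_0}$, and to add $Q_0$ and its relevant ancestors to $\mathcal S$; a chain of increasing cubes is trivially sparse after passing to every second cube, or one can just absorb it into the geometric sum.

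\emph{Stopping time inside $Q_0$.} I set $\mathcal S_0=\{Q_0\}$. Given $P\in\mathcal S_k$, I let $\operatorname{ch}(P)$ be the maximal dyadic $Q\subsetneq P$ with $\langle f\rangle_{s,Q}^s>2^{n+1}\langle f\rangle_{s,P}^s$. By maximality and the weak-type bound,
\[
\sum_{Q\in\operatorname{ch}(P)}|Q|\le \frac{\int_P f^s}{2^{n+1}\langle f\rangle_{s,P}^s}=\frac{|P|}{2^{n+1}}\le\frac{|P|}2 ,
\]
which gives the packing condition \eqref{eq:sparse-packing} with $E_P=P\setminus\bigcup \operatorname{ch}(P)$. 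For every dyadic $Q\subseteq Q_0$ let $\pi(Q)$ be the minimal stopping cube containing $Q$. Then $\langle f\rangle_{s,Q}\le 2^{(n+1)/s}\langle f\rangle_{s,\pi(Q)}$.

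\emph{Summation.} Fix $x$ and $P\in\mathcal S$ with $x\in P$. The cubes $Q$ with $\pi(Q)=P$ and $x\in Q$ form a chain $P=Q^0\supsetneq Q^1\supsetneq\cdots$ with $\ell(Q^j)=2^{-j}\ell(P)$. Hence
\[
\sum_{\pi(Q)=P,\ x\in Q}\ell(Q)^\alpha\langle f\rangle_{s,Q}\le 2^{(n+1)/s}\langle f\rangle_{s,P}\,\ell(P)^\alpha\sum_{j\ge0}2^{-j\alpha}=c\,\ell(P)^\alpha\langle f\rangle_{s,P}.
\]
Summing over $P\in\mathcal S$ gives $I^{\mathcal D}_{\alpha,L^s}f(x)\le c\,I^{\mathcal S}_{\alpha,L^s}f(x)$, with $c=c(n,\alpha,s)$.

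\emph{Main obstacle.} The step needing most care is the reduction to cubes inside a fixed top cube. The condition $s<n/\alpha$ is used exactly there: it makes the contributions of large cubes summable and comparable to a single term. That single term must itself be put in $\mathcal S$, which is only possible if the added chain of large cubes is sparse. I would resolve this by keeping only the ancestors of $Q_0$ whose side lengths grow by a factor large enough (e.g.\ every cube in the chain, noting a chain with doubling sizes already satisfies $|Q|\le 2|Q\setminus Q'|$). I would then absorb the finitely many top cubes across the $2^n$ regions into one sparse family by the same disjointness argument.
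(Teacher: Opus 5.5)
Your argument is correct in substance, but it takes a different route from the one behind the lemma. The paper gives no proof; it cites Cruz-Uribe--Moen for (i) and Hoang--Moen--P\'erez for (ii). The standard argument there is a global Calder\'on--Zygmund decomposition at all heights. With $a=2^{n+1}$, one sets $\mathcal S=\bigcup_{k\in\mathbb Z}\mathcal Q^k$, where $\mathcal Q^k$ consists of the maximal cubes of $\mathcal D$ with $\langle f\rangle_{s,Q}^s>a^k$. These cubes exist for every $k$ because $\langle f\rangle_{s,Q}\to0$ as $\ell(Q)\to\infty$. Each $Q$ with $\langle f\rangle_{s,Q}^s\in(a^k,a^{k+1}]$ lies in a unique $P\in\mathcal Q^k$, and those containing $x$ contribute at most $a^{(k+1)/s}\sum_{Q\subseteq P,\,x\in Q}\ell(Q)^\alpha\le c\,a^{1/s}\ell(P)^\alpha\langle f\rangle_{s,P}$. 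Maximality gives $\langle f\rangle_{s,P}^s\le 2^na^k$, hence $|P\cap\bigcup\mathcal Q^{k+1}|\le|P|/2$. Large cubes are absorbed automatically, so no top cube is ever needed. Your principal-cubes stopping time is relative to the previous stopping cube, so it needs a starting cube; that is what forces your reduction step, and that step is where your write-up is loose.

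The reduction works only if the top cubes are chosen one per quadrant of $\mathcal D$. Here a quadrant is the union of an increasing chain of cubes of $\mathcal D$; by Lerner--Nazarov there are at most $2^n$ of them and they partition $\mathbb R^n$. This choice makes the ancestor chains of different top cubes pairwise disjoint. That is what your phrase ``each lying in its own quadrant-type chain'' must mean, and it needs justification. For example, take the common side length beyond every level at which two of the finitely many chains merge.

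With that choice, every ancestor $A$ of a top cube $Q_0$ satisfies $\int_A f^s=\int_{Q_0}f^s$. Setting $E_A=A\setminus A^-$, with $A^-$ the child of $A$ in the chain, gives sparseness, and all your claims hold.

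With equal-size covering cubes whose chains later merge, both claims fail. First, the ancestors' terms are not uniformly controlled by the term of a top cube that carries only a small share of the mass. Second, a merge cube with several children in $\mathcal S$ can violate $|Q|\le 2|E_Q|$. For instance, with $n=1$, two sibling top cubes, their parent $A\in\mathcal S$, and one stopping cube inside one sibling force $\sum|E|>|A|$.

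Finally, once all ancestors lie in $\mathcal S$, they are dominated with constant $1$. So, contrary to your remark, $s<n/\alpha$ is not used in the inequality itself; in either approach it only guarantees that the sums are finite.
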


Thus the dyadic fractional operators, including their $L^s$-refined
versions, admit pointwise domination by sparse operators of the same type.

\subsection{Muckenhoupt weights}

A weight $w$ is a nonnegative locally integrable function. For
$1<p<\infty$, $w\in A_p$ means that
\[
[w]_{A_p}
:=
\sup_Q
\Bigl(\fint_Q w\Bigr)
\Bigl(\fint_Q w^{1-p'}\Bigr)^{p-1}
<\infty.
\]
For $p=1$, we say that $w\in A_1$ if
\[
Mw\le Cw \qquad \text{a.e.},
\]
where
\[
Mf(x):=\sup_{Q\ni x}\fint_Q |f(y)|\,\dd y
\]
is the Hardy--Littlewood maximal operator. For
$A_\infty:=\bigcup_{p\ge1}A_p$, we use the Fujii--Wilson constant
\[
[w]_{A_\infty}
:=
\sup_Q\frac1{w(Q)}\int_Q M(w\1_Q).
\]

For $1<p\le q<\infty$, the Muckenhoupt--Wheeden class $A_{p,q}$
\cite{Muckenhoupt-Wheeden} consists of the weights $w$ such that
\[
[w]_{A_{p,q}}
:=
\sup_Q
\Bigl(\fint_Q w^q\Bigr)^{1/q}
\Bigl(\fint_Q w^{-p'}\Bigr)^{1/p'}
<\infty.
\]
One has
\begin{equation}\label{eq:Apq-rescaling}
w\in A_{p,q}
\quad\Longleftrightarrow\quad
w^q\in A_{1+q/p'}.
\end{equation}

\subsection{Fractional maximal operators}
For $0<\alpha<n$ and $1\le s<n/\alpha$, we define
\[
M_{\alpha,L^s}f(x):=\sup_{Q\ni x}\ell(Q)^{\alpha}\Bigl(\fint_Q|f|^s\Bigr)^{1/s},
\]
the supremum being over all cubes containing $x$. By \cite[Theorem~5.1]{HMP_funct_anal}, for every $w\in A_\infty$, every $0<r<\infty$ and every sparse family $\mathcal S$, we have the following estimate 
\begin{equation}\label{eq:sparse-vs-max}
\begin{aligned}
\|I^{\mathcal S}_{\alpha,L^s}f\|_{L^r(w)}
&\le C\,\|M_{\alpha,L^s}f\|_{L^r(w)},\\
\|I^{\mathcal S}_{\alpha,L^s}f\|_{L^{r,\infty}(w)}
&\le C\,\|M_{\alpha,L^s}f\|_{L^{r,\infty}(w)}.
\end{aligned}
\end{equation}
with $C=C(n,\alpha,s,r,[w]_{A_\infty})$ and independent of $\mathcal S$.

\subsection{The local Poincar\'e--Sobolev inequality}
For $1\le m<n$ and $1\le q\le \frac{nm}{n-m}$, one has, for every ball $B=B(x,r)\subset\mathbb R^n$ and every $f\in C^\infty(\mathbb R^n)$,
\begin{equation}\label{PoincareSobolev_inequality}
\Bigl(\fint_{B}|f-f_{B}|^{q}\dd y\Bigr)^{1/q}
\le C(n,m,q)\; r\,\Bigl(\fint_{B}|\nabla f|^{m}\dd y\Bigr)^{1/m},
\end{equation}
where $f_B:=\fint_B f$. We refer to \cite[Theorem~3.14]{Kinnunen} for a proof.

\subsection{A Morrey seminorm}
For $1\le p\le q<\infty$ we use the homogeneous Morrey seminorm given by
\begin{equation}\label{eq:Morrey-def}
\|h\|_{\Mor{p}{q}(\mathbb R^n)}
:=
\sup_{x\in\mathbb R^n,\ r>0}
r^{\,n\left(\frac1q-\frac1p\right)}
\Bigl(\int_{B(x,r)}|h(y)|^p\dd y\Bigr)^{1/p}.
\end{equation}
In particular, $\Mor{q}{q}(\mathbb R^n)=L^q(\mathbb R^n)$ and $L^q(\mathbb R^n)\subseteq \Mor{p}{q}(\mathbb R^n)$ for $p\le q$ by H\"older's inequality.

\section{Pointwise sparse domination of \texorpdfstring{$T_\Omega$ and $T_\Omega^\ast$}{the rough operators}}\label{sec:main}

Recall from \cite{HMP_JAM} that in the critical regime $\Omega\in L^{n,\infty}(\Sn)$ one has the pointwise bound \eqref{eq:HMP-critical}. Below the critical integrability, that is for $\Omega\in L^\rho(\Sn)$ with $1<\rho<n$, the substitutes for $I_1$ are the sparse operators $I^{\mathcal S}_{1,L^p}$ of Section~\ref{sec:sparse}. Throughout this section, $\widetilde\rho$ is the exponent defined in \eqref{eq:rho-tilde-def}; recall that $1<\widetilde\rho<n$ and that $\widetilde\rho\downarrow1$ as $\rho\uparrow n$.

\subsection{The main theorem}

\begin{theorem}\label{thm:main}
Let $n\ge2$ and $1<\rho<n$. Suppose that $\Omega\in L^\rho(\Sn)$ with $\int_{\Sn}\Omega\dd\sigma=0$, and let $\widetilde\rho\le p<n$. For every $f\in C_c^\infty(\mathbb R^n)$ there exist sparse families 
\[
\mathcal S_t\subseteq\mathcal D^t,\quad  t\in\{0,\frac13\}^n,
\] 
depending on $f$, such that
\begin{equation}\label{eq:main-sparse}
T^\ast_\Omega f(x)
\le
c(n,p,\rho)\,\|\Omega\|_{L^\rho(\Sn)}
\sum_{t\in\{0,1/3\}^n}
I^{\mathcal S_t}_{1,L^p}(|\nabla f|)(x),
\qquad x\in\mathbb R^n.
\end{equation}
In particular, the same bound holds for $|T_\Omega f(x)|$.
\end{theorem}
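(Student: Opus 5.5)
Fix $x$ and $\varepsilon>0$ and write $K(y)=\Omega(y')|y|^{-n}$ with $y'=y/|y|$. The plan has three steps: a dyadic-annulus decomposition with mean subtraction, a pointwise bound on each annulus by an $L^p$ gradient average, and then summation through Lemma~\ref{lem:three-lattice}, Lemma~\ref{lem:sparse}(ii) and a reduction to $p=\widetilde\rho$.

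\emph{Decomposition.} Choose $k_0\in\mathbb Z$ with $2^{k_0}\le\varepsilon<2^{k_0+1}$. Split
\[
\int_{|y|>\varepsilon}K(y)f(x-y)\dd y=\int_{\varepsilon<|y|<2^{k_0+1}}K f(x-\cdot)+\sum_{k>k_0}\int_{2^k\le|y|<2^{k+1}}Kf(x-\cdot).
\]
Let $A_k=\{2^k\le|y|<2^{k+1}\}$. By the cancellation of $\Omega$, $\int_{A_k}K=0$, and the same holds on the truncated annulus $\{\varepsilon<|y|<2^{k_0+1}\}$, since it is radial. So on each piece we may replace $f(x-y)$ by $f(x-y)-f_{B_k}$, where $B_k=B(x,2^{k+1})$.

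\emph{Annulus estimate.} Since $|K|\le 2^{-kn}|\Omega(y')|$ on $A_k$ (or on the truncated piece), H\"older in polar coordinates with exponents $\rho,\rho'$ gives a bound of the form
\[
\lesssim \|\Omega\|_{L^\rho(\Sn)}\Bigl(\fint_{B_k}|f-f_{B_k}|^{\rho'}\Bigr)^{1/\rho'}.
\]
The key point is uniformity in $\varepsilon$. After H\"older, the inner radial integral runs over $r\in(\varepsilon,2^{k_0+1})\subset(2^{k_0},2^{k_0+1})$, so the truncated annulus is dominated by the full one. Concretely, bound $\int_{\varepsilon<|y|<2^{k_0+1}}|K||f-f_B|$ by $\int_{A_{k_0}}|\Omega(y')||y|^{-n}|f(x-y)-f_B|\dd y$; the cancellation has already been used, so passing to absolute values is harmless. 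This step is the main obstacle, and it is handled by subtracting the constant before taking absolute values.

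Then apply the Poincar\'e--Sobolev inequality \eqref{PoincareSobolev_inequality} with $m=\widetilde\rho$ and $q=\rho'=n\widetilde\rho/(n-\widetilde\rho)$. This gives, per annulus,
\[
\lesssim \|\Omega\|_{L^\rho}\,2^k\Bigl(\fint_{B_k}|\nabla f|^{\widetilde\rho}\Bigr)^{1/\widetilde\rho}\le \|\Omega\|_{L^\rho}\,2^k\Bigl(\fint_{B_k}|\nabla f|^{p}\Bigr)^{1/p}
\]
by H\"older, since $p\ge\widetilde\rho$. Alternatively, use the Poincar\'e inequality directly with $m=p$, noting $np/(n-p)\ge\rho'$.

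\emph{Summation and sparsification.} Summing over $k\ge k_0$ and taking the supremum over $\varepsilon$ gives
\[
T^\ast_\Omega f(x)\lesssim\|\Omega\|_{L^\rho}\sum_{k\in\mathbb Z}2^k\Bigl(\fint_{B(x,2^{k+1})}|\nabla f|^p\Bigr)^{1/p}.
\]
For each $k$, enclose $B(x,2^{k+1})$ in a cube of side $2^{k+2}$. By Lemma~\ref{lem:three-lattice} it lies in some $Q\in\mathcal D^t$ with $\ell(Q)\le 6\cdot2^{k+2}$ and $x\in Q$, and comparable volumes give the same bound with $\ell(Q)(\fint_Q|\nabla f|^p)^{1/p}$. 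Each $Q$ arises from boundedly many $k$, because $\ell(Q)$ is comparable to $2^k$. Hence the sum is bounded by $c\sum_t I^{\mathcal D^t}_{1,L^p}(|\nabla f|)(x)$.

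Finally apply Lemma~\ref{lem:sparse}(ii) with $\alpha=1$ and $s=p<n$ to each $\mathcal D^t$; here $|\nabla f|\in L^\infty_c$. This yields sparse families $\mathcal S_t$ that depend only on $|\nabla f|$, not on $x$, which gives \eqref{eq:main-sparse}. The bound for $|T_\Omega f|\le T^\ast_\Omega f$ is immediate.
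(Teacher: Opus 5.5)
Your proposal is correct and follows essentially the same route as the paper's proof. Both use an annular decomposition in which the truncated piece is absorbed into a full annulus after subtracting ball averages via cancellation, then H\"older with exponents $\rho,\rho'$, the Poincar\'e--Sobolev inequality with $q=\rho'$, discretization through Lemma~\ref{lem:three-lattice}, and sparsification by Lemma~\ref{lem:sparse}(ii). The remaining differences are bookkeeping only: your dyadic indexing and ball radii, and counting the bounded multiplicity of scales rather than passing to the ancestor cube of side $2^{k+4}$.
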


\begin{proof}

Let us fix $x\in\mathbb R^n$ and $\varepsilon>0$, and let $k_0\in\mathbb Z$ be such that
$2^{k_0-2}<\varepsilon\le 2^{k_0-1}$. Setting $y':=y/|y|$, we decompose the domain:
\begin{align*}
T^{\varepsilon}_\Omega f(x)
&:=\int_{|y|>\varepsilon}\frac{\Omega(y')}{|y|^{n}}\,f(x-y)\dd y\\
&=\int_{\{\varepsilon<|y|\le 2^{k_0-1}\}}\frac{\Omega(y')}{|y|^{n}}\,f(x-y)\dd y
+\sum_{k\ge k_0}\ \int_{\{2^{k-1}<|y|\le 2^{k}\}}\frac{\Omega(y')}{|y|^{n}}\,f(x-y)\dd y.
\end{align*}
Since $\Omega$ has vanishing integral over $\Sn$, we have in polar coordinates, for all $0<a<b$,
\begin{equation}\label{eq:cancellation}
\int_{\{a<|y|\le b\}}\frac{\Omega(y')}{|y|^{n}}\dd y
=\log\frac ba\int_{\Sn}\Omega\dd\sigma=0,
\end{equation}
so that, setting $f_{B_k}:=\fint_{B(x,2^{k})}f$, we may subtract these averages in each of
the integrals above:
\begin{align*}
T^{\varepsilon}_\Omega f(x)
&=\int_{\{\varepsilon<|y|\le 2^{k_0-1}\}}\frac{\Omega(y')}{|y|^{n}}\,
\bigl(f(x-y)-f_{B_{k_0-1}}\bigr)\dd y\\
&\quad+\sum_{k\ge k_0}\ \int_{\{2^{k-1}<|y|\le 2^{k}\}}\frac{\Omega(y')}{|y|^{n}}\,
\bigl(f(x-y)-f_{B_{k}}\bigr)\dd y.
\end{align*}
Now, since $|y|^{-n}\le 2^{n}\,2^{-kn}$ on the annulus $\{2^{k-1}<|y|\le 2^{k}\}$ and
$|y|^{-n}\le\varepsilon^{-n}\le 2^{n}\,2^{-(k_0-1)n}$ on the truncated piece
$\{\varepsilon<|y|\le 2^{k_0-1}\}$, while each of these sets is contained in the
corresponding ball $B(0,2^{k})$ (with $k=k_0-1$ for the truncated piece), we obtain
\begin{align}
\bigl|T^{\varepsilon}_\Omega f(x)\bigr|
&\le \frac{2^{n}}{2^{(k_0-1)n}}\int_{B(0,2^{k_0-1})}|\Omega(y')|\,
\bigl|f(x-y)-f_{B_{k_0-1}}\bigr|\dd y\notag\\
&\quad+2^{n}\sum_{k\ge k_0}\frac{1}{2^{kn}}\int_{B(0,2^{k})}|\Omega(y')|\,
\bigl|f(x-y)-f_{B_{k}}\bigr|\dd y\notag\\
&\le 2^{n}\sum_{k\in\mathbb Z}\frac{1}{2^{kn}}\int_{B(0,2^{k})}|\Omega(y')|\,
\bigl|f(x-y)-f_{B_{k}}\bigr|\dd y.\label{eq:annular-bound}
\end{align}
Note that the constant $2^{n}$ depends neither
on $\varepsilon$ nor on $k_0$.

Fix now $k\in\mathbb Z$; by H\"older's inequality with the conjugate exponents $\rho$ and
$\rho'$, we get
\begin{align}
&\int_{B(0,2^{k})}|\Omega(y')|\,\bigl|f(x-y)-f_{B_{k}}\bigr|\dd y\notag\\
&\qquad\le\Bigl(\;\int_{B(0,2^{k})}|\Omega(y')|^{\rho}\dd y\Bigr)^{\frac1\rho}
\Bigl(\;\int_{B(0,2^{k})}\bigl|f(x-y)-f_{B_{k}}\bigr|^{\rho'}\dd y\Bigr)^{\frac1{\rho'}}\notag\\
&\qquad=\Bigl(\int_{0}^{2^{k}}r^{n-1}\dd r\int_{\Sn}|\Omega(\theta)|^{\rho}\dd\sigma(\theta)\Bigr)^{\frac1\rho}
\Bigl(\;\int_{B(x,2^{k})}\bigl|f(z)-f_{B_{k}}\bigr|^{\rho'}\dd z\Bigr)^{\frac1{\rho'}}\notag\\
&\qquad=n^{-\frac1\rho}\,\omega_n^{\frac1{\rho'}}\;2^{kn}\,\|\Omega\|_{L^\rho(\Sn)}
\Bigl(\;\fint_{B(x,2^{k})}\bigl|f(z)-f_{B_{k}}\bigr|^{\rho'}\dd z\Bigr)^{\frac1{\rho'}},
\label{eq:holder-step}
\end{align}
where $\omega_n=|B(0,1)|$, so that $|B(x,2^{k})|=\omega_n2^{kn}$. Using
\eqref{eq:holder-step} into \eqref{eq:annular-bound}, we get that
\begin{equation}\label{eq:after-holder}
\bigl|T^{\varepsilon}_\Omega f(x)\bigr|
\le c(n,\rho)\,\|\Omega\|_{L^\rho(\Sn)}
\sum_{k\in\mathbb Z}
\Bigl(\;\fint_{B(x,2^{k})}\bigl|f(z)-f_{B_{k}}\bigr|^{\rho'}\dd z\Bigr)^{\frac1{\rho'}},
\end{equation}
with $c(n,\rho)=2^{n}\,n^{-\frac1\rho}\,\omega_n^{\frac1{\rho'}}$.

We apply \eqref{PoincareSobolev_inequality} on $B(x,2^k)$ with $q=\rho'$ and gradient exponent $m=p$. The hypothesis $p\ge\widetilde\rho$ is the required condition $\rho'\le\frac{np}{n-p}$.

 Therefore
\begin{equation}\label{eq:S-bound}
|T^{\varepsilon}_\Omega f(x)|
\le
c(n,p,\rho)\,\|\Omega\|_{L^\rho(\Sn)}
\sum_{k\in\mathbb Z}
2^{k}\Bigl(\fint_{B(x,2^{k})}|\nabla f|^{p}\dd z\Bigr)^{1/p}.
\end{equation}

By Lemma~\ref{lem:three-lattice} applied to the cube of side length $2^{k+1}$ centred at $x$, which contains $B(x,2^k)$, there exist $t\in\{0,\frac13\}^n$ and a cube $P\in\mathcal D^t$ with 
\[ 
B(x,2^k)\subseteq P \text{ and } \ell(P)\le 6\cdot 2^{k+1}<2^{k+4}.
\] 

Let $Q_k\in\mathcal D^t$ be the ancestor of $P$ with $\ell(Q_k)=2^{k+4}$. Then $x\in B(x,2^k)\subseteq Q_k$ and $|Q_k|=c_n|B(x,2^k)|$, from which it follows that
\begin{align}\label{eq:discretization}
2^{k}\Bigl(\fint_{B(x,2^{k})}|\nabla f|^{p}\dd y\Bigr)^{1/p}
&\le
c_n\,\ell(Q_k)\Bigl(\fint_{Q_k}|\nabla f|^{p}\dd y\Bigr)^{1/p}\1_{Q_k}(x)
\notag\\
&\le
c_n\sum_{t\in\{0,1/3\}^n}\ \sum_{\substack{Q\in\mathcal D^t\\ \ell(Q)=2^{k+4}}}
\ell(Q)\Bigl(\fint_{Q}|\nabla f|^{p}\dd y\Bigr)^{1/p}\1_{Q}(x).
\end{align}

 Summing \eqref{eq:discretization} over $k\in\mathbb Z$  we conclude from \eqref{eq:S-bound} that
\begin{equation}\label{eq:dyadic-bound}
|T^{\varepsilon}_\Omega f(x)|
\le
c(n,p,\rho)\,\|\Omega\|_{L^\rho(\Sn)}
\sum_{t\in\{0,1/3\}^n}
I^{\mathcal D^t}_{1,L^p}(|\nabla f|)(x).
\end{equation}

The right-hand side of \eqref{eq:dyadic-bound} no longer depends on $\varepsilon$. Since $|\nabla f|\in L^\infty_c(\mathbb R^n)$ and $1\le p<n$, Lemma~\ref{lem:sparse}(ii) with $\alpha=1$ and $s=p$ provides, for each $t$, a sparse family 
$$\mathcal S_t=\mathcal S_t(|\nabla f|)\subseteq\mathcal D^t$$ with
$$I^{\mathcal D^t}_{1,L^p}(|\nabla f|)\le c(n,p)\,I^{\mathcal S_t}_{1,L^p}(|\nabla f|)$$ pointwise. Taking the supremum over $\varepsilon>0$ in \eqref{eq:dyadic-bound} yields \eqref{eq:main-sparse} and the proof is complete.
\end{proof}

\begin{remark}\label{rem:comparison-HMP}
(i) For $T_\Omega$ itself, the sparse bound of Theorem~\ref{thm:main} with
$p=\widetilde\rho$ is contained in \cite[Theorem~1.4]{HMP_funct_anal}
(case $\alpha=1$), even under the weaker assumption
$\Omega\in L^{\rho,\rho^*}(\mathbb S^{n-1})$, while the maximal truncation
was controlled in the critical case $\Omega\in L^{n,\infty}(\mathbb S^{n-1})$
in \cite[Theorem~1.3]{HMP_JAM}. The new point of Theorem~\ref{thm:main}
is the subcritical sparse control of $T^\ast_\Omega$, uniformly in the
truncation parameter; the treatment of the truncation follows the
annular decomposition in the proof of \cite[Theorem~1.3]{HMP_JAM}.

(ii) The sparse families depend on $f$ (through $|\nabla f|$), but their number, at most $2^n$, and the constant $c(n,p,\rho)$ do not; in particular the constant is uniform over all truncations.

(iii) The lower endpoint $p=\widetilde\rho$ is determined by the interaction
between the $\rho$-H\"older estimate and the local Poincar\'e--Sobolev
inequality. The restriction $p<n$ is also the natural range for the
sparse operator $I^{\mathcal S}_{1,L^p}$.\end{remark}

\begin{remark}[Lorentz refinement]
Let
\[
\rho^\ast=\frac{n\rho}{n-\rho}.
\]
The conclusion of Theorem~\ref{thm:main} remains valid under the weaker
assumption
\[
\Omega\in L^{\rho,\rho^\ast}(\Sn),
\]
with $\|\Omega\|_{L^\rho(\Sn)}$ replaced by
$\|\Omega\|_{L^{\rho,\rho^\ast}(\Sn)}$.

Indeed, set $s=\widetilde\rho$. Then
\[
s^\ast=\rho',
\qquad
(\rho^\ast)'=s.
\]
Using Lorentz H\"older's inequality and the Lorentz
Poincar\'e--Sobolev inequality on balls, as in the proof of \cite[Theorem~1.4]{HMP_funct_anal}, one obtains
\[
2^{-kn}\int_{B(0,2^k)}
|\Omega(y')|\,|f(x-y)-f_{B_k}|\,\dd y
\lesssim
\|\Omega\|_{L^{\rho,\rho^\ast}(\Sn)}
2^k
\left(\fint_{B(x,2^k)}|\nabla f|^s\right)^{1/s}.
\]
Since $s=\widetilde\rho\le p$, the normalized $L^p$ averages dominate
the $L^s$ averages. Hence \eqref{eq:S-bound} holds with
$\|\Omega\|_{L^\rho}$ replaced by
$\|\Omega\|_{L^{\rho,\rho^\ast}}$, and the remainder of the proof is
unchanged.
\end{remark}

\subsection{A Hedberg-type refinement}

The classical idea of Hedberg \cite{Hedberg1972} adapts to the sparse operators $I^{\mathcal S}_{1,L^p}$, with the Morrey seminorm \eqref{eq:Morrey-def} measuring the large scales. 
\begin{proposition}\label{prop:hedberg}
Let $1\le p<n$ and $p<\beta<n$, and let $\mathcal S$ be a sparse family in some dyadic grid. For every $g\in\Mor{p}{pn/\beta}(\mathbb R^n)$ with $g\ge0$,
\begin{equation}\label{eq:hedberg}
I^{\mathcal S}_{1,L^p}g(x)
\le
c(n,p,\beta)\,
\Bigl[\sup_{\substack{Q\in\mathcal S\\ x\in Q}}\Bigl(\fint_Q g^{p}\dd y\Bigr)^{1/p}\Bigr]^{1-\frac p\beta}
\,\|g\|_{\Mor{p}{pn/\beta}(\mathbb R^n)}^{\frac p\beta},
\qquad x\in\mathbb R^n,
\end{equation}
with the convention that the supremum over the empty family equals $0$.
\end{proposition}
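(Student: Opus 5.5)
Fix $x$. Only cubes $Q\in\mathcal S$ with $x\in Q$ contribute. Since $\mathcal S$ lies in one dyadic grid, these cubes are nested, and side lengths are powers of $2$. I will split the sum at a scale $R>0$ to be chosen, and estimate small and large cubes differently. Set
\[
A:=\sup_{Q\in\mathcal S,\,x\in Q}\Bigl(\fint_Q g^p\Bigr)^{1/p},\qquad N:=\|g\|_{\Mor{p}{pn/\beta}}.
\]
If $A=0$, every average vanishes and there is nothing to prove. I also expect $A<\infty$: any cube $Q\ni x$ lies in a ball of radius comparable to $\ell(Q)$, so the Morrey bound gives $A\lesssim N\,\ell(Q)^{-\beta/p}$ uniformly in $Q$. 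I will use this to assume $0<A<\infty$ and $N<\infty$.

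\emph{Small cubes, $\ell(Q)\le R$.} Bound each average by $A$. Since side lengths are distinct dyadic numbers, this part is at most
\[
A\sum_{2^j\le R}2^j\le 2AR.
\]

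\emph{Large cubes, $\ell(Q)>R$.} Cover $Q$ by the ball $B(c_Q,\sqrt n\,\ell(Q))$ and use the definition \eqref{eq:Morrey-def} with $q=pn/\beta$, so that $n(\frac1q-\frac1p)=\frac{\beta-n}{p}$. This gives
\[
\Bigl(\int_Q g^p\Bigr)^{1/p}\le c\,\ell(Q)^{(n-\beta)/p}N,
\]
and hence
\[
\ell(Q)\Bigl(\fint_Q g^p\Bigr)^{1/p}\le c\,N\,\ell(Q)^{1-\beta/p}.
\]
Since $\beta>p$, the exponent $1-\beta/p$ is negative, so the geometric sum over dyadic $\ell(Q)>R$ is at most $c(n,p,\beta)\,N R^{1-\beta/p}$.

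\emph{Optimization.} Combining, $I^{\mathcal S}_{1,L^p}g(x)\lesssim AR+NR^{1-\beta/p}$. Choose $R=(N/A)^{p/\beta}$, which balances the two terms. Then both terms equal $A^{1-p/\beta}N^{p/\beta}$, which is \eqref{eq:hedberg}.

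The argument never uses sparseness, only that the cubes containing $x$ have distinct dyadic side lengths. The only point needing care is the degenerate cases $A=0$ or $N=\infty$, which are handled as above.
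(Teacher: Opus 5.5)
Your argument is correct and is the paper's proof: you split the chain of cubes containing $x$ at a scale $r$, bound the small cubes by $2r$ times the supremum, bound the large ones through the Morrey estimate $(\fint_Q g^p)^{1/p}\lesssim \|g\|_{\Mor{p}{pn/\beta}}\,\ell(Q)^{-\beta/p}$ and a geometric sum, and then balance the two terms. One small correction: the Morrey bound does not force $A<\infty$, since it degenerates as $\ell(Q)\to0$ (e.g.\ $g=|y|^{-\beta/p}$ with the sparse chain $[0,2^{-k})^n$ gives $A=+\infty$ at $x=0$), but that case is trivial because then $N>0$ and the right-hand side is $+\infty$, which is how the paper disposes of it.
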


\begin{proof}
Fix $x$ and set $G(x):=\sup_{Q\in\mathcal S,\,x\in Q}\bigl(\fint_Q g^{p}\dd y\bigr)^{1/p}$. Any two cubes of the same dyadic grid are nested or disjoint, so the cubes of $\mathcal S$ containing $x$ form a chain under inclusion, with side lengths a strictly increasing sequence of powers of $2$. Let $r>0$ and split the defining sum of $I^{\mathcal S}_{1,L^p}g(x)$ according to $\ell(Q)\le r$ or $\ell(Q)>r$.

For the small scales, since the side lengths in the chain $\{Q\in\mathcal S:\ x\in Q,\ \ell(Q)\le r\}$ are distinct powers of $2$ not exceeding $r$, their sum is at most $2r$, and therefore
\[
\sum_{\substack{Q\in\mathcal S,\ x\in Q\\ \ell(Q)\le r}}\ell(Q)\Bigl(\fint_Q g^{p}\dd y\Bigr)^{1/p}
\le G(x)\sum_{\substack{Q\in\mathcal S,\ x\in Q\\ \ell(Q)\le r}}\ell(Q)
\le 2r\,G(x).
\]

For the large scales we use the Morrey information. Since $Q$ is
contained in the concentric ball $B$ of radius $\frac{\sqrt n}{2}\ell(Q)$,
\eqref{eq:Morrey-def} gives
\[
\Bigl(\int_Q g^{p}\dd y\Bigr)^{1/p}\le c_n\,\|g\|_{\Mor{p}{pn/\beta}}\,|Q|^{\frac1p-\frac{\beta}{pn}}
\qquad\text{for every cube }Q,
\]
and dividing by $|Q|^{1/p}$ gives 
$$\bigl(\fint_Q g^{p}\dd y\bigr)^{1/p}\le c_n\|g\|_{\Mor{p}{pn/\beta}}\,\ell(Q)^{-\beta/p}.$$

 Consequently, since $\beta>p$ then $1-\beta/p<0$ and,
\[
\sum_{\substack{Q\in\mathcal S,\ x\in Q\\ \ell(Q)> r}}\ell(Q)\Bigl(\fint_Q g^{p}\dd y\Bigr)^{1/p}
\le c_n\,\|g\|_{\Mor{p}{pn/\beta}}\sum_{\substack{Q\in\mathcal S,\ x\in Q\\ \ell(Q)> r}}\ell(Q)^{1-\beta/p}
\le c(n,p,\beta)\,\|g\|_{\Mor{p}{pn/\beta}}\; r^{\,1-\beta/p}.
\]

It follows that,
\[
I^{\mathcal S}_{1,L^p}g(x)\le c(n,p,\beta)\bigl(r\,G(x)+r^{\,1-\beta/p}\,\|g\|_{\Mor{p}{pn/\beta}}\bigr)
\qquad\text{for every }r>0.
\]
If $G(x)=0$ or $G(x)=+\infty$, the conclusion is immediate.
Assume therefore that $0<G(x)<\infty$. The choice
\[
r=\left(\frac{\|g\|_{\Mor{p}{pn/\beta}}}{G(x)}\right)^{p/\beta}
\]
balances the two terms and yields \eqref{eq:hedberg}.
\end{proof}

Combining Proposition~\ref{prop:hedberg} with Theorem~\ref{thm:main} (applied with parameter $p$) we obtain:

\begin{corollary}\label{cor:hedberg-T}
Let $n\ge2$, $1<\rho<n$, and let $\Omega\in L^\rho(\Sn)$ with $\int_{\Sn}\Omega\dd\sigma=0$. Let $\widetilde\rho\le p<n$ and $p<\beta<n$. For every $f\in C_c^\infty(\mathbb R^n)$, with the sparse families $\mathcal S_t$ of Theorem~\ref{thm:main},
\begin{equation}\label{eq:main-hedberg}
T^\ast_\Omega f(x)
\le
c(n,p,\rho,\beta)\,\|\Omega\|_{L^\rho(\Sn)}
\sum_{t\in\{0,1/3\}^n}
\Bigl[\sup_{\substack{Q\in\mathcal S_t\\ x\in Q}}\Bigl(\fint_Q |\nabla f|^{p}\dd y\Bigr)^{1/p}\Bigr]^{1-\frac p\beta}
\|\nabla f\|_{\Mor{p}{pn/\beta}(\mathbb R^n)}^{\frac p\beta}
\end{equation}
for all $x\in\mathbb R^n$.
\end{corollary}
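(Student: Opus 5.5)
The corollary is obtained by composing the two pointwise estimates already established, so the plan has only two steps. First, we apply Theorem~\ref{thm:main} with the given parameter $p\in[\widetilde\rho,n)$ to $f\in C_c^\infty(\mathbb R^n)$. This produces sparse families $\mathcal S_t\subseteq\mathcal D^t$, $t\in\{0,\frac13\}^n$, such that
\[
T^\ast_\Omega f(x)\le c(n,p,\rho)\,\|\Omega\|_{L^\rho(\Sn)}\sum_{t}I^{\mathcal S_t}_{1,L^p}(|\nabla f|)(x)
\]
for every $x$. The families here are exactly the ones referred to in the statement of the corollary.

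Second, for each fixed $t$ we apply Proposition~\ref{prop:hedberg} with $\mathcal S=\mathcal S_t$, $g=|\nabla f|$ and the given $\beta\in(p,n)$. This bounds $I^{\mathcal S_t}_{1,L^p}(|\nabla f|)(x)$ by
\[
c(n,p,\beta)\Bigl[\sup_{Q\in\mathcal S_t,\,x\in Q}\Bigl(\fint_Q|\nabla f|^p\Bigr)^{1/p}\Bigr]^{1-p/\beta}\|\nabla f\|_{\Mor{p}{pn/\beta}}^{p/\beta}.
\]
Substituting these bounds into the sum over the $2^n$ values of $t$ gives \eqref{eq:main-hedberg}, with constant $c(n,p,\rho)\,c(n,p,\beta)$.

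The only hypothesis that needs checking is $g\in\Mor{p}{pn/\beta}(\mathbb R^n)$. Since $|\nabla f|$ is bounded with compact support, it lies in $L^{pn/\beta}$. Because $\beta<n$ we have $pn/\beta>p$, so the inclusion $L^q\subseteq\Mor{p}{q}$ for $p\le q$, noted after \eqref{eq:Morrey-def}, gives finiteness of the Morrey seminorm. One can also see this directly: for small balls, boundedness of $|\nabla f|$ controls the quantity in \eqref{eq:Morrey-def}; for large balls, the bounded support together with the negative exponent $n(\frac\beta{pn}-\frac1p)<0$ does.

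I do not expect any real obstacle. The only care needed is to use the same exponent $p$ in both results, which is allowed because $p\ge\widetilde\rho\ge1$ and $p<\beta<n$ satisfy the hypotheses of both Theorem~\ref{thm:main} and Proposition~\ref{prop:hedberg}.
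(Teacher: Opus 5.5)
Your proposal is correct and follows the paper's route exactly: the paper obtains the corollary by applying Theorem~\ref{thm:main} with parameter $p$ and then Proposition~\ref{prop:hedberg} to each $I^{\mathcal S_t}_{1,L^p}(|\nabla f|)$ with $g=|\nabla f|$. Your explicit check that $|\nabla f|\in\Mor{p}{pn/\beta}(\mathbb R^n)$, via $L^{pn/\beta}\subseteq\Mor{p}{pn/\beta}$ because $pn/\beta>p$, is a detail the paper leaves implicit (it is only noted later in Remark~\ref{rem:hedberg-M}).
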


\begin{remark}\label{rem:hedberg-M}
Since $\sup_{Q\in\mathcal S_t,\,x\in Q}\bigl(\fint_Q|\nabla f|^p\bigr)^{1/p}\le\bigl(M(|\nabla f|^p)(x)\bigr)^{1/p}$, where $M$ is the Hardy--Littlewood maximal operator, \eqref{eq:main-hedberg} implies the sparse-free bound
\begin{equation}\label{eq:hedberg-M}
T^\ast_\Omega f(x)
\le
c(n,p,\rho,\beta)\,\|\Omega\|_{L^\rho(\Sn)}
\bigl(M(|\nabla f|^p)(x)\bigr)^{\frac1p\left(1-\frac p\beta\right)}
\|\nabla f\|_{\Mor{p}{pn/\beta}(\mathbb R^n)}^{\frac p\beta},
\end{equation}
which is a Hedberg-type estimate for the present setting
\cite{Hedberg1972}; the corresponding use of Morrey control in
estimates for Riesz potentials goes back to Adams \cite{Adams1975}. Since $L^{pn/\beta}(\mathbb R^n)\subseteq\Mor{p}{pn/\beta}(\mathbb R^n)$, the Morrey hypothesis holds in particular whenever $|\nabla f|\in L^{pn/\beta}(\mathbb R^n)$. 
Since $|T_\Omega f|\le T^\ast_\Omega f$, inequality
\eqref{eq:hedberg-M} recovers the main pointwise estimate stated in
\cite[Theorem~1]{CMM_JMAA}, with the same ranges
$\widetilde\rho\le p<n$ and $p<\beta<n$.
A corrected proof of that result, which in fact yields the corresponding
estimate for the maximal truncation $T^\ast_\Omega$, is given in
\cite{CMM_corrigendum}.
The derivation given here, through Theorem~\ref{thm:main} and Proposition~\ref{prop:hedberg}, is different, and yields in addition the localized refinement \eqref{eq:main-hedberg}, in which the maximal function is replaced by a supremum over the cubes of the sparse families only. We will make essential use of \eqref{eq:hedberg-M} in Section~\ref{sec:grand}.
\end{remark}

\section{Weighted Sobolev inequalities}\label{sec:weighted}

The pointwise estimate of Theorem~\ref{thm:main} allows norm inequalities
for $I^{\mathcal S}_{1,L^s}$ that are uniform in the sparse family to be
transferred to $T^\ast_\Omega$, with $|\nabla f|$ on the right-hand side
and an additional factor
$c(n,\rho,s)\|\Omega\|_{L^\rho(\Sn)}$.
We apply this principle below on the Sobolev scale
\[
\frac1p-\frac1q=\frac1n.
\]

Throughout, $\Omega$ is subject to the standing assumptions
\begin{equation}\label{eq:standing-Omega}
\Omega\in L^\rho(\Sn),\qquad \int_{\Sn}\Omega\dd\sigma=0,\qquad 1<\rho<n,
\end{equation}
and $s$ denotes an auxiliary parameter with
\begin{equation}\label{eq:standing-s}
\widetilde\rho\le s<n,
\end{equation}
with which Theorem~\ref{thm:main} will be applied. 

The choice of $s$ affects the class of admissible weights; see
Remark~\ref{rem:weighted-sec4}(iii). We begin with a one-weight inequality
covering the full family of weights allowed by the sparse theory of
\cite{HMP_funct_anal}, from which a vector-valued version is derived by
off-diagonal extrapolation. We then turn to two-weight phenomena, in
which the weights on the two sides of the inequality are no longer tied to one
another: first under a mixed condition of $A_{p,q}$-type coupled with $A_\infty$
hypotheses, following Cruz-Uribe and Moen \cite{Cruz-UribeMoen}, and finally
under the testing conditions of Sawyer \cite{Sawyer1988}. Since
$|T_\Omega f|\le T^\ast_\Omega f$ pointwise, every estimate established in this
section holds for $T_\Omega$; we will not repeat this observation.

\subsection{One-weight off-diagonal inequality}

\begin{theorem}\label{thm:weighted-extrapolated}
Assume \eqref{eq:standing-Omega}--\eqref{eq:standing-s}, let
\begin{equation}\label{eq:Sobolev-pair}
s<p<n,
\qquad
\frac1p-\frac1q=\frac1n,
\end{equation}
and let $w$ be a weight with $w^s\in A_{p/s,\,q/s}$. Then, for every $f\in C_c^\infty(\mathbb R^n)$,
\[
\|T^\ast_\Omega f\|_{L^q(w^q)}
\le
C\,\|\Omega\|_{L^\rho(\Sn)}\,\|\nabla f\|_{L^p(w^p)},
\]
with $C=C\bigl(n,p,q,\rho,s,[w^s]_{A_{p/s,q/s}}\bigr)$.
\end{theorem}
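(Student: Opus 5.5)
The plan is to transfer a weighted bound for the sparse operators $I^{\mathcal S}_{1,L^s}$ to $T^\ast_\Omega$ through Theorem~\ref{thm:main}, applied with parameter $s$ (admissible by \eqref{eq:standing-s}). For $f\in C_c^\infty(\mathbb R^n)$ it gives
\[
T^\ast_\Omega f\le c(n,s,\rho)\,\|\Omega\|_{L^\rho(\Sn)}\sum_{t\in\{0,1/3\}^n} I^{\mathcal S_t}_{1,L^s}(|\nabla f|)
\]
pointwise, with at most $2^n$ sparse families. Taking $L^q(w^q)$ norms and using the quasi-triangle inequality (here $q>1$, so it is the true triangle inequality), it suffices to prove
\[
\|I^{\mathcal S}_{1,L^s}g\|_{L^q(w^q)}\le C\,\|g\|_{L^p(w^p)},\qquad g\ge0,
\]
with $C$ independent of the sparse family $\mathcal S$.

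For the sparse estimate I would reduce to the $L^1$-sparse operator by rescaling. Write
\[
\Bigl(\fint_Q g^s\Bigr)^{1/s}=\Bigl(\fint_Q h\Bigr)^{1/s},\qquad h=g^s .
\]
Then
\[
\|I^{\mathcal S}_{1,L^s}g\|_{L^q(w^q)}^{s}=\Bigl\|\Bigl(\sum_{Q\in\mathcal S}\ell(Q)\Bigl(\fint_Q h\Bigr)^{1/s}\1_Q\Bigr)^s\Bigr\|_{L^{q/s}(w^q)} .
\]
The key tool is the standard sparse fact that for each $x$ the cubes containing $x$ form a chain. I would then use the known $A_{p/s,q/s}$ theory of \cite[Theorem~1.4/Section~4]{HMP_funct_anal}: there the exact hypothesis $w^s\in A_{p/s,q/s}$ with $\frac1p-\frac1q=\frac1n$ is the condition under which $I^{\mathcal S}_{1,L^s}:L^p(w^p)\to L^q(w^q)$ uniformly in $\mathcal S$. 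In the plan I would cite this bound directly, since the statement refers to ``the full family of weights allowed by the sparse theory of \cite{HMP_funct_anal}''.

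If a self-contained argument were required, I would instead use \eqref{eq:sparse-vs-max}. The condition $w^s\in A_{p/s,q/s}$ gives, via \eqref{eq:Apq-rescaling}, $w^q\in A_{1+(q/s)/(p/s)'}\subseteq A_\infty$. Hence
\[
\|I^{\mathcal S}_{1,L^s}g\|_{L^q(w^q)}\lesssim\|M_{1,L^s}g\|_{L^q(w^q)} .
\]
Then
\[
M_{1,L^s}g=\bigl(M_{s,L^1}(g^s)\bigr)^{1/s},
\]
where $M_{s,L^1}$ is the fractional maximal operator of order $s$. The remaining bound is $M_{s}:L^{p/s}(w^p)\to L^{q/s}(w^q)$. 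Since $\frac{s}{p}-\frac{s}{q}=\frac sn$, this is exactly the Muckenhoupt--Wheeden theorem \cite{Muckenhoupt-Wheeden} for the weight $w^s\in A_{p/s,q/s}$, because $(w^s)^{p/s}=w^p$ and $(w^s)^{q/s}=w^q$. Raising to the power $1/s$ then gives the claim, with the constant depending on $[w^s]_{A_{p/s,q/s}}$ (and $[w^q]_{A_\infty}$, which is controlled by it).

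The main obstacle is bookkeeping the exponent identities: one must check that $p/s>1$, which holds because $s<p$, and that the Muckenhoupt--Wheeden relation holds with fractional order $s<n$ and the pair $(p/s,q/s)$. A further point is that \eqref{eq:sparse-vs-max} needs $M_{1,L^s}g\in L^q(w^q)$ to be meaningful; this follows a posteriori from the maximal bound, or is handled by the fact that $|\nabla f|$ is bounded with compact support.
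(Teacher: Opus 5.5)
Your proposal is correct and follows the paper's proof: apply Theorem~\ref{thm:main} with parameter $s$, bound each $I^{\mathcal S_t}_{1,L^s}(|\nabla f|)$ from $L^p(w^p)$ to $L^q(w^q)$ uniformly in the sparse family using the weighted theory of \cite{HMP_funct_anal} (the paper cites Theorem~5.2 there with $\alpha=1$, not Theorem~1.4), and sum the $2^n$ terms. Your fallback route is also valid and is the mechanism the paper uses in Corollary~\ref{cor:endpoint} and Theorem~\ref{thm:mixed-two-weight}: pass to $M_{1,L^s}$ via \eqref{eq:sparse-vs-max}, since $w^q\in A_\infty$ by \eqref{eq:Apq-rescaling}, write $M_{1,L^s}g=(M_s(g^s))^{1/s}$, and apply the Muckenhoupt--Wheeden bound for $M_s$ with $w^s\in A_{p/s,q/s}$.
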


\begin{proof}
By Theorem~\ref{thm:main}, applied with parameter $s$, there exist
sparse families
\[
\mathcal S_t\subseteq\mathcal D^t,
\qquad
t\in\{0,\tfrac13\}^n,
\]
such that
\begin{equation}\label{eq:proof-weighted-pointwise}
T^\ast_\Omega f(x)
\le
c(n,\rho,s)\,\|\Omega\|_{L^\rho(\Sn)}
\sum_{t\in\{0,1/3\}^n}
I^{\mathcal S_t}_{1,L^s}(|\nabla f|)(x).
\end{equation}

Since
\[
1\le s<p<n,
\qquad
\frac1p-\frac1q=\frac1n,
\qquad
w^s\in A_{p/s,q/s},
\]
By \cite[Theorem~5.2]{HMP_funct_anal} with $\alpha=1$, we have
\begin{equation}\label{eq:HMP-weighted}
\bigl\|I^{\mathcal S_t}_{1,L^s}(|\nabla f|)\bigr\|_{L^q(w^q)}
\le
C_0\,
\|\nabla f\|_{L^p(w^p)},
\end{equation}
where
\[
C_0
=
C_0\bigl(
n,p,q,s,[w^s]_{A_{p/s,q/s}}
\bigr)
\]
is independent of the sparse family $\mathcal S_t$.

Taking the $L^q(w^q)$ norm in
\eqref{eq:proof-weighted-pointwise} and using
\eqref{eq:HMP-weighted}, we obtain
\begin{align*}
\|T^\ast_\Omega f\|_{L^q(w^q)}
&\le
c(n,\rho,s)\,\|\Omega\|_{L^\rho(\Sn)}
\sum_{t\in\{0,1/3\}^n}
\bigl\|I^{\mathcal S_t}_{1,L^s}(|\nabla f|)\bigr\|_{L^q(w^q)}
\\
&\le
2^n c(n,\rho,s)C_0\,
\|\Omega\|_{L^\rho(\Sn)}
\|\nabla f\|_{L^p(w^p)}.
\end{align*}
This proves the result.
\end{proof}

\begin{corollary}[Weighted endpoint]\label{cor:endpoint}
Assume \eqref{eq:standing-Omega} and let $w$ be a weight with
$w^{\rho'}\in A_1$. Then, for every $f\in C_c^\infty(\mathbb R^n)$,
\[
\|T^\ast_\Omega f\|_{L^{\rho',\infty}(w^{\rho'})}
\le
C\,\|\Omega\|_{L^\rho(\Sn)}
\,\|\nabla f\|_{L^{\widetilde\rho}(w^{\widetilde\rho})},
\]
with $C=C(n,\rho,[w^{\rho'}]_{A_1})$.
\end{corollary}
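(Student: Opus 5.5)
Apply Theorem~\ref{thm:main} with the endpoint parameter $s=\widetilde\rho$, which is admissible since $\widetilde\rho<n$. This gives sparse families $\mathcal S_t\subseteq\mathcal D^t$ with
\[
T^\ast_\Omega f\le c(n,\rho)\,\|\Omega\|_{L^\rho(\Sn)}\sum_{t\in\{0,1/3\}^n} I^{\mathcal S_t}_{1,L^{\widetilde\rho}}(|\nabla f|).
\]
Because $L^{\rho',\infty}(w^{\rho'})$ is quasi-normed, a sum of $2^n$ terms costs only a constant depending on $n$. It therefore suffices to prove the weak-type bound
\[
\|I^{\mathcal S}_{1,L^{\widetilde\rho}}g\|_{L^{\rho',\infty}(w^{\rho'})}\lesssim\|g\|_{L^{\widetilde\rho}(w^{\widetilde\rho})},
\]
uniformly over sparse families $\mathcal S$. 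The exponents sit on the Sobolev line: $\frac1{\widetilde\rho}-\frac1{\rho'}=\frac1n$, with $p=s=\widetilde\rho$ and $q=\rho'$. This is exactly the endpoint $p=s$ excluded in Theorem~\ref{thm:weighted-extrapolated}, where $A_{p/s,q/s}$ degenerates to $A_{1,\rho'/\widetilde\rho}$.

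\textbf{Reduction to the fractional maximal operator.} A weight with $w^{\rho'}\in A_1$ lies in $A_\infty$. So the second line of \eqref{eq:sparse-vs-max} with $r=\rho'$ gives
\[
\|I^{\mathcal S}_{1,L^{\widetilde\rho}}g\|_{L^{\rho',\infty}(w^{\rho'})}\le C\,\|M_{1,L^{\widetilde\rho}}g\|_{L^{\rho',\infty}(w^{\rho'})},
\]
with $C$ depending on $[w^{\rho'}]_{A_\infty}\lesssim[w^{\rho'}]_{A_1}$. Next write $M_{1,L^{\widetilde\rho}}g=\bigl(M_{\widetilde\rho}(g^{\widetilde\rho})\bigr)^{1/\widetilde\rho}$, where $M_\beta$ denotes the fractional maximal operator of order $\beta=\widetilde\rho$. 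Set $h=g^{\widetilde\rho}$ and $\lambda=\rho'/\widetilde\rho$, so that $\frac1\lambda=1-\frac{\widetilde\rho}{n}$. The task then becomes the classical weak-type estimate
\[
\|M_{\widetilde\rho}h\|_{L^{\lambda,\infty}(W)}\lesssim\|h\|_{L^1(W^{1/\lambda})},\qquad W=w^{\rho'},
\]
since $W^{1/\lambda}=w^{\widetilde\rho}$. This is the endpoint Muckenhoupt--Wheeden inequality for fractional maximal operators with $W\in A_1$, i.e.\ $w^{\widetilde\rho}\in A_{1,\lambda}$-type.

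\textbf{Proving the endpoint.} I would prove it directly by a covering argument. For a level $\tau>0$, the set $\{M_{\widetilde\rho}h>\tau\}$ is covered, via Lemma~\ref{lem:three-lattice} and a Vitali/maximal dyadic selection, by disjoint cubes $Q_j$ with $\ell(Q_j)^{\widetilde\rho}\fint_{Q_j}h>\tau/c$. The $A_1$ condition gives
\[
\frac{W(Q_j)}{|Q_j|}\le C\,\operatorname*{ess\,inf}_{Q_j}W,
\]
and hence
\[
W(Q_j)^{1/\lambda}\le C\,|Q_j|^{1/\lambda}\operatorname*{ess\,inf}_{Q_j}W^{1/\lambda}.
\]
Combining this with
\[
|Q_j|^{1/\lambda}=|Q_j|^{1-\widetilde\rho/n}<\frac{c}{\tau}\int_{Q_j}h
\]
yields
\[
W(Q_j)^{1/\lambda}\lesssim\frac1\tau\int_{Q_j}hW^{1/\lambda}.
\]
Summing with $\sum_j a_j^{1/\lambda}\ge(\sum_j a_j)^{1/\lambda}$ reversed appropriately (since $1/\lambda<1$, use $\sum_j W(Q_j)\le(\sum_j W(Q_j)^{1/\lambda})^{\lambda}$) gives
\[
W(\{M_{\widetilde\rho}h>\tau\})^{1/\lambda}\lesssim\tau^{-1}\|h\|_{L^1(W^{1/\lambda})}.
\]
Finally, taking $1/\widetilde\rho$ powers translates $L^{\lambda,\infty}$ and $L^1$ back into $L^{\rho',\infty}(w^{\rho'})$ and $L^{\widetilde\rho}(w^{\widetilde\rho})$.

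The main obstacle is this endpoint step. The strong-type machinery behind Theorem~\ref{thm:weighted-extrapolated} fails at $p=s$, and one must be careful that the selected cubes are genuinely disjoint and capture the level set up to a dilation. I would handle this with dyadic maximal cubes in each of the $2^n$ shifted grids, using Lemma~\ref{lem:three-lattice} to compare general cubes with dyadic ones. Working dyadically avoids any doubling loss beyond $[W]_{A_1}$.
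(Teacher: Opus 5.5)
Your proposal is correct and follows the paper's proof. Both arguments apply Theorem~\ref{thm:main} with $p=\widetilde\rho$, use the weak-type comparison \eqref{eq:sparse-vs-max} for $w^{\rho'}\in A_1\subset A_\infty$, rewrite $M_{1,L^{\widetilde\rho}}g=\bigl(M_{\widetilde\rho}(g^{\widetilde\rho})\bigr)^{1/\widetilde\rho}$ with $\lambda=\rho'/\widetilde\rho=n/(n-\widetilde\rho)$, and conclude with the Muckenhoupt--Wheeden endpoint bound. The only difference is that the paper cites that endpoint inequality, whereas you prove it with a maximal-cube covering argument in each of the $2^n$ shifted grids; that argument is valid here, since maximal cubes exist because $h=|\nabla f|^{\widetilde\rho}\in L^1$.
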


\begin{proof}
Apply Theorem~\ref{thm:main} with $p=\widetilde\rho$. Since
$w^{\rho'}\in A_1\subset A_\infty$, the weak-type estimate in
\eqref{eq:sparse-vs-max} gives, uniformly in $\mathcal S$,
\[
\|I^{\mathcal S}_{1,L^{\widetilde\rho}}g
\|_{L^{\rho',\infty}(w^{\rho'})}
\lesssim
\|M_{1,L^{\widetilde\rho}}g
\|_{L^{\rho',\infty}(w^{\rho'})}
=
\|M_{\widetilde\rho,L^1}(g^{\widetilde\rho})
\|_{L^{\rho'/\widetilde\rho,\infty}(w^{\rho'})}^{1/\widetilde\rho}.
\]
Since
\[
\frac{\rho'}{\widetilde\rho}
=
\frac{n}{n-\widetilde\rho}
\]
and
\[
\bigl(w^{\widetilde\rho}\bigr)^{n/(n-\widetilde\rho)}
=
w^{\rho'}\in A_1,
\]
the endpoint weighted inequality of Muckenhoupt and Wheeden
\cite{Muckenhoupt-Wheeden} gives
\[
\|M_{\widetilde\rho,L^1}(g^{\widetilde\rho})
\|_{L^{\rho'/\widetilde\rho,\infty}(w^{\rho'})}
\lesssim
\|g^{\widetilde\rho}\|_{L^1(w^{\widetilde\rho})}.
\]
Taking $g=|\nabla f|$ and summing over the $2^n$ sparse families,
using an equivalent norm on $L^{\rho',\infty}$, completes the proof.
\end{proof}
\begin{remark}\label{rem:weighted-sec4}
Four comments are in order.

(i)  For the untruncated operator $T_\Omega$, the corresponding estimate
follows from \cite[Theorem~1.4]{HMP_funct_anal} and
\cite[Theorem~5.2]{HMP_funct_anal}, together with the monotonicity
\[
I^{\mathcal S}_{1,L^{\widetilde\rho}}g
\le
I^{\mathcal S}_{1,L^s}g,
\qquad s\ge\widetilde\rho.
\]
Theorem~\ref{thm:main} allows the same sparse argument to be applied
directly to the maximal truncation $T^\ast_\Omega$.

(ii) In contrast with \cite[Corollary~5.1]{CMM_JMAA}, obtained from
\eqref{eq:hedberg-M}, Theorem~\ref{thm:weighted-extrapolated} requires no
Morrey control of the gradient and gives the fixed Sobolev gain
\[
\frac1p-\frac1q=\frac1n
\]
under the off-diagonal weight condition $w^s\in A_{p/s,q/s}$.
The estimate in \cite{CMM_JMAA} is instead a one-weight,
interpolation-type inequality involving a Morrey factor.

(iii) The classes of admissible weights corresponding to different values of $s$
differ from one another. For power weights $w=|x|^\lambda$, the condition
$w^s\in A_{p/s,\,q/s}$ amounts to
\[
-\frac nq<\lambda<n\Bigl(\frac1s-\frac1p\Bigr),
\]
a range that widens as $s$ decreases. The endpoint $s=\widetilde\rho$ is therefore the most favourable
choice, corresponding to the strongest sparse estimate in
Theorem~\ref{thm:main}; see Remark~\ref{rem:comparison-HMP}.

(iv) Since $(p/s)'=p/(p-s)$ and $q/p=1+q/n$, the rescaling
\eqref{eq:Apq-rescaling} gives
\[
w^{s}\in A_{p/s,q/s}
\iff
w^{q}\in A_{r_s},
\qquad
r_s
=
q\Bigl(\frac1s-\frac1n\Bigr).
\]
Since
\[
\frac1{\widetilde\rho}-\frac1n=\frac1{\rho'},
\]
we have
\[
r_s\le r_{\widetilde\rho}=\frac q{\rho'}
\qquad\text{for every }s\ge\widetilde\rho.
\]
Moreover, $p>\widetilde\rho$ implies $q>\rho'$. Since the Muckenhoupt
classes are increasing,
\[
A_{r_1}\subset A_{r_2}
\qquad\text{whenever }r_1\le r_2,
\]
every weight in Theorem~\ref{thm:weighted-extrapolated}
satisfies
\[
w^q\in A_{q/\rho'}.
\]
This is a classical sufficient condition for the boundedness of
$T_\Omega$ on $L^q(w^q)$ when $\Omega\in L^\rho(\Sn)$
\cite{Duoandikoetxea1993,Watson1990}.

Moreover,
\[
\frac q{\rho'}
<
\frac q{n'}
=
1+\frac q{p'},
\]
and therefore
\[
w^q\in A_{1+q/p'}.
\]
By \eqref{eq:Apq-rescaling}, this is equivalent to $w\in A_{p,q}$.
Hence the corresponding estimate for $T_\Omega$ also follows from the
classical weighted bound for $T_\Omega$, the weighted estimate for $I_1$
of Muckenhoupt and Wheeden \cite{Muckenhoupt-Wheeden}, and the pointwise
inequality
\[
|f|\le c_n I_1(|\nabla f|).
\]

This reduction does not apply to the two-weight results below, where the
weight $u$ is only assumed to belong to $A_\infty$. Such a condition does
not imply the $L^q(u)$ boundedness of singular integrals. For instance,
the Riesz transforms are bounded on $L^q(u)$ only when $u\in A_q$;
see \cite[Chapter~V]{Stein1993}. Since
\[
|T_\Omega f|\le T^\ast_\Omega f,
\]
the same obstruction applies to the maximal truncation.

\end{remark}

\subsection{Vector-valued inequalities}

\begin{corollary}\label{cor:vector-valued}
Assume \eqref{eq:standing-Omega}--\eqref{eq:standing-s}, let $p,q$ satisfy \eqref{eq:Sobolev-pair}, let $s<\ell<\infty$, and let $w$ be a weight with $w^s\in A_{p/s,\,q/s}$. Then, for every sequence $\{f_j\}_{j}\subset C_c^\infty(\mathbb R^n)$,
\[
\Bigl\|\Bigl(\sum_j\bigl|T^\ast_\Omega f_j\bigr|^{\ell}\Bigr)^{1/\ell}\Bigr\|_{L^q(w^q)}
\le
C\,\|\Omega\|_{L^\rho(\Sn)}\,
\Bigl\|\Bigl(\sum_j|\nabla f_j|^{\ell}\Bigr)^{1/\ell}\Bigr\|_{L^p(w^p)},
\]
with $C=C\bigl(n,p,q,\rho,s,\ell,[w^s]_{A_{p/s,q/s}}\bigr)$.
\end{corollary}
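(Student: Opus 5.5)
The plan is to deduce the vector-valued bound from the scalar weighted estimate of Theorem~\ref{thm:weighted-extrapolated} by off-diagonal Rubio de Francia extrapolation, in the form of Harboure--Mac\'ias--Segovia and Lacey--Moen--P\'erez--Torres, rescaled by the parameter $s$. Two facts make this possible. First, by Remark~\ref{rem:weighted-sec4}(iv), the condition $w^s\in A_{p/s,q/s}$ is equivalent to $w^q\in A_{r_s}$ with $r_s=q(\frac1s-\frac1n)$. Second, the constant in Theorem~\ref{thm:weighted-extrapolated} depends only on $[w^s]_{A_{p/s,q/s}}$ and is increasing in it.

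\emph{Step 1: rewrite the scalar estimate as an extrapolation hypothesis.} Set $F=T^\ast_\Omega f$ and $G=|\nabla f|$. Put $\alpha:=1-s/n$, so that $\frac sp-\frac sq=\frac sn$ and the pairs $(p/s,q/s)$ lie on the off-diagonal line $\frac1{\bar p}-\frac1{\bar q}=\frac sn$, with $0<s/n<1$. Theorem~\ref{thm:weighted-extrapolated} then says that for \emph{every} pair $(p,q)$ with $s<p<n$ and $\frac1p-\frac1q=\frac1n$, and every $v$ with $v\in A_{p/s,q/s}$ (where $v=w^s$),
\[
\|F^s\|_{L^{q/s}(v^{q/s})}\le C([v])\,\|\Omega\|^s_{L^\rho(\Sn)}\|G^s\|_{L^{p/s}(v^{p/s})}.
\]
This is exactly the input required by the off-diagonal extrapolation theorem applied to the family of pairs $(F^s,G^s)$, with indices $(\bar p,\bar q)=(p/s,q/s)$ ranging over $1<\bar p<n/s$.

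\emph{Step 2: extrapolate to $\ell^{\ell/s}$-valued bounds.} The vector-valued corollary of off-diagonal extrapolation, proved by applying extrapolation to the pairs $\bigl((\sum_j F_j^{s\cdot\ell/s})^{s/\ell},(\sum_j G_j^{\ell})^{s/\ell}\bigr)$, gives
\[
\Bigl\|\Bigl(\sum_j (F_j^s)^{\ell/s}\Bigr)^{s/\ell}\Bigr\|_{L^{q/s}(v^{q/s})}\lesssim\Bigl\|\Bigl(\sum_j (G_j^s)^{\ell/s}\Bigr)^{s/\ell}\Bigr\|_{L^{p/s}(v^{p/s})}
\]
for any exponent $\ell/s\in(1,\infty)$, that is for $\ell>s$, and for all admissible $(p,q)$ and $v$. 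Taking $s$-th roots yields the claim, with the stated dependence of the constant.

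\emph{Alternative route.} One could avoid extrapolation by working directly at the sparse level. Theorem~\ref{thm:main} applied to each $f_j$ gives $T^\ast_\Omega f_j\lesssim\sum_t I^{\mathcal S_{t,j}}_{1,L^s}(|\nabla f_j|)$, but the sparse families depend on $j$. One would then dominate by the dyadic operators $I^{\mathcal D^t}_{1,L^s}$ and prove weighted vector-valued bounds for those directly. That is harder, so the extrapolation route is the one to commit to.

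\emph{Main obstacle.} The delicate point is checking that the extrapolation theorem applies in the rescaled form. This requires three things. First, the hypothesis must hold for all pairs on the line, not only for the given $(p,q)$; it does, because $p\in(s,n)$ is arbitrary in Theorem~\ref{thm:weighted-extrapolated}. Second, the constants must depend monotonically on the weight characteristic. Third, the pairs $(F,G)$ must be handled with finite left-hand sides. The last point can be arranged by the standard truncation to finitely many $j$ and by replacing $F$ with $\min(F,N)\1_{B(0,N)}$, then applying monotone convergence.
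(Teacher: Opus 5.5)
Your proposal is correct and follows essentially the same route as the paper: apply Theorem~\ref{thm:weighted-extrapolated} to the rescaled pairs $\bigl(|T^\ast_\Omega f|^{s},\ \|\Omega\|_{L^\rho(\Sn)}^{s}|\nabla f|^{s}\bigr)$ with arbitrary weights $v\in A_{p/s,q/s}$ (via $w=v^{1/s}$), invoke off-diagonal vector-valued extrapolation on the line $\frac1{\bar p}-\frac1{\bar q}=\frac sn$ with exponent $\ell/s>1$, and take $s$-th roots. Two small remarks. The parameter $\alpha:=1-s/n$ you introduce plays no role, since the relevant gap is $s/n$. Also, the extrapolation theorem only needs the scalar bound at the single pair $(p/s,q/s)$, not along the whole line.
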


\begin{proof}
It suffices to prove the estimate for finitely many functions
$f_1,\dots,f_N\in C_c^\infty(\mathbb R^n)$. Consider the family $\mathcal F$ of
pairs of nonnegative functions
\[
\mathcal F=\Bigl\{\bigl(\,|T^\ast_\Omega f|^{s},\
\|\Omega\|_{L^\rho(\Sn)}^{s}\,|\nabla f|^{s}\,\bigr):\ f\in C_c^\infty(\mathbb R^n)\Bigr\}.
\]
Let $v\in A_{p/s,\,q/s}$ be an arbitrary weight. Since $v^{q/s}=(v^{1/s})^{q}$ and
$v^{p/s}=(v^{1/s})^{p}$, Theorem~\ref{thm:weighted-extrapolated}, applied with the
weight $v^{1/s}$ and raised to the power $s$, gives for every pair in $\mathcal F$
\begin{align*}
\bigl\|\,|T^\ast_\Omega f|^{s}\bigr\|_{L^{q/s}(v^{q/s})}
&=\Bigl(\;\int_{\mathbb R^n}\bigl|T^\ast_\Omega f(x)\bigr|^{q}\,v(x)^{\frac qs}\dd x\Bigr)^{\frac sq}\\
&\le C\,\Bigl(\;\int_{\mathbb R^n}
\bigl(\|\Omega\|_{L^\rho(\Sn)}^{s}\,|\nabla f(x)|^{s}\bigr)^{\frac ps}\,v(x)^{\frac ps}\dd x\Bigr)^{\frac sp}\\
&=C\,\bigl\|\,\|\Omega\|_{L^\rho(\Sn)}^{s}\,|\nabla f|^{s}\bigr\|_{L^{p/s}(v^{p/s})}.
\end{align*}
Since $1<\frac ps<\frac qs<+\infty$ and
$\frac{1}{p/s}-\frac{1}{q/s}=\frac sn$, the family $\mathcal F$ satisfies the
hypotheses of the off-diagonal vector-valued extrapolation theorem
\cite[Theorem~3.23]{CMP}.
Applying its conclusion with the weight $w^{s}\in A_{p/s,\,q/s}$, the exponent
$\frac\ell s>1$ and the pairs of $\mathcal F$ associated with $f_1,\dots,f_N$, we
obtain
\begin{align*}
\Bigl\|\Bigl(\sum_{j=1}^{N}\bigl|T^\ast_\Omega f_j\bigr|^{\ell}\Bigr)^{\frac1\ell}\Bigr\|_{L^{q}(w^{q})}^{s}
&=\Bigl\|\Bigl(\sum_{j=1}^{N}\bigl|T^\ast_\Omega f_j\bigr|^{\ell}\Bigr)^{\frac s\ell}\Bigr\|_{L^{q/s}(w^{q})}\\
&\le C\,\Bigl\|\Bigl(\sum_{j=1}^{N}
\bigl(\|\Omega\|_{L^\rho(\Sn)}^{s}\,|\nabla f_j|^{s}\bigr)^{\frac\ell s}\Bigr)^{\frac s\ell}\Bigr\|_{L^{p/s}(w^{p})}\\
&=C\,\|\Omega\|_{L^\rho(\Sn)}^{s}\,
\Bigl\|\Bigl(\sum_{j=1}^{N}|\nabla f_j|^{\ell}\Bigr)^{\frac1\ell}\Bigr\|^{s}_{L^{p}(w^{p})}.
\end{align*}
Raising this inequality to the power $\frac1s$ completes the proof.
\end{proof}

\subsection{A mixed two-weight inequality}
In the next result we consider two weights case. Given $p,q$ as in \eqref{eq:Sobolev-pair} and $s$ with
\begin{equation}\label{eq:two-weight-s-range}
\widetilde\rho\le s<p,
\end{equation}
we denote by
\[
\widetilde p:=\frac ps,\qquad
\widetilde q:=\frac qs,\qquad
\sigma:=v^{1-\widetilde p'}=v^{-\frac{s}{p-s}},
\]
where $v$ is a weight, and note that $\frac1{\widetilde p}-\frac1{\widetilde q}=\frac sn$, the Sobolev relation for the fractional integral $I_s$.

\begin{theorem}\label{thm:mixed-two-weight}
Assume \eqref{eq:standing-Omega}, \eqref{eq:Sobolev-pair} and \eqref{eq:two-weight-s-range}, and let $u,v$ be weights such that
\begin{equation}\label{eq:mixed-two-weight}
[u,\sigma]_{\widetilde p,\widetilde q}
:=\sup_{Q}\Bigl(\fint_Q u\Bigr)^{1/\widetilde q}\Bigl(\fint_Q\sigma\Bigr)^{1/\widetilde p'}<\infty.
\end{equation}
Then, for every $f\in C_c^\infty(\mathbb R^n)$:
\begin{enumerate}
\item[(i)] if $u\in A_\infty$, then
$\displaystyle \|T^\ast_\Omega f\|_{L^{q,\infty}(u)}\le C\,\|\Omega\|_{L^\rho(\Sn)}\,\|\nabla f\|_{L^p(v)}$;
\item[(ii)] if $u\in A_\infty$ and $\sigma\in A_\infty$, then
$\displaystyle \|T^\ast_\Omega f\|_{L^{q}(u)}\le C\,\|\Omega\|_{L^\rho(\Sn)}\,\|\nabla f\|_{L^p(v)}$.
\end{enumerate}
The constants depend on $n,p,q,\rho,s$, on $[u,\sigma]_{\widetilde p,\widetilde q}$, and on the relevant $A_\infty$ characteristics.
\end{theorem}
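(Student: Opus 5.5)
The plan is to reduce everything to the sparse operator $I^{\mathcal S}_{1,L^s}$ through Theorem~\ref{thm:main}, and then to the sparse fractional integral $I^{\mathcal S}_{s}$ acting on $g^s$.

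First, apply Theorem~\ref{thm:main} with parameter $s$; this is allowed because $\widetilde\rho\le s<p<n$. It gives
\[
T^\ast_\Omega f\le c\,\|\Omega\|_{L^\rho(\Sn)}\sum_t I^{\mathcal S_t}_{1,L^s}(|\nabla f|).
\]
Since there are only $2^n$ terms, it suffices to bound $\|I^{\mathcal S}_{1,L^s}g\|$ in $L^{q,\infty}(u)$, respectively $L^q(u)$, by $\|g\|_{L^p(v)}$, uniformly in the sparse family $\mathcal S$, with $g=|\nabla f|\in L^\infty_c$.

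Second, linearize the $L^s$ averages. Set $h:=g^s$. Because $\ell(Q)^s=|Q|^{s/n}$, we have
\[
\ell(Q)\Bigl(\fint_Q g^s\Bigr)^{1/s}=\Bigl(\ell(Q)^s\fint_Q h\Bigr)^{1/s}.
\]
The cubes of $\mathcal S$ containing $x$ form a chain, and $1/s\le1$. Hence the elementary inequality $\sum a_Q^{1/s}\le(\sum a_Q)^{1/s}$ fails in the direction we need, so I will not use it. I will instead use the sparse analogue of the norm comparison employed for $L^s$-refined operators in \cite{HMP_funct_anal}: with $s$ playing the role of a rescaling, the target estimate becomes
\[
\|I^{\mathcal S}_{1,L^s}g\|_{L^q(u)}^s=\bigl\|(I^{\mathcal S}_{1,L^s}g)^s\bigr\|_{L^{\widetilde q}(u)},
\]
and I aim to dominate $(I^{\mathcal S}_{1,L^s}g)^s$ in norm by $I^{\mathcal S'}_{s}h=\sum_{Q\in\mathcal S'}|Q|^{s/n}\langle h\rangle_Q\1_Q$.

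Concretely, I first try the following route. In the $A_\infty$ norm setting, \eqref{eq:sparse-vs-max} with $w=u\in A_\infty$ reduces the problem to
\[
M_{1,L^s}g=(M_{s}h)^{1/s},
\]
where $M_s$ is the fractional maximal operator of order $s$. The estimate $\|M_s h\|_{L^{\widetilde q}(u)}\lesssim\|h\|_{L^{\widetilde p}(v)}$ then holds under the two-weight condition $[u,\sigma]_{\widetilde p,\widetilde q}$ together with $\sigma\in A_\infty$. This is the Cruz-Uribe--Moen mixed bound for fractional maximal operators; alternatively one can use their $I_s$ bound, since $M_s\lesssim I_s$. For part (ii) the exponents match: $\|h\|_{L^{\widetilde p}(v)}=\|g\|_{L^p(v)}^s$, and $\frac1{\widetilde p}-\frac1{\widetilde q}=\frac sn$ is exactly the Sobolev relation for order $s$. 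Taking $s$-th roots yields (ii).

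For (i) I use the weak-type half of \eqref{eq:sparse-vs-max} with $u\in A_\infty$. This reduces matters to $\|M_sh\|_{L^{\widetilde q,\infty}(u)}\lesssim[u,\sigma]\,\|h\|_{L^{\widetilde p}(v)}$. That inequality follows from the joint condition alone, by the standard covering argument for the weak type of $M_s$: Hölder's inequality with $\sigma$ on each selected cube, followed by $\widetilde p\le\widetilde q$ to sum over the selected cubes.

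The main obstacle is invoking the correct Cruz-Uribe--Moen statement with $\sigma\in A_\infty$ and $u\in A_\infty$. A safer route for (ii), if their result requires bumps, is to bound the sparse operator $I^{\mathcal S}_s h$ directly by duality and testing. One pairs $I^{\mathcal S}_sh$ against $\phi\in L^{\widetilde q'}(u)$, writes $h=(h\sigma^{-1})\sigma$, and estimates
\[
\sum_{Q\in\mathcal S}|Q|^{s/n}\langle h\rangle_Q\,\langle\phi u\rangle_Q|Q|.
\]
This is handled with $[u,\sigma]$, the sparse sets $E_Q$, and the weighted dyadic maximal functions $M^{\sigma}$ and $M^{u}$. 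The $A_\infty$ conditions on $\sigma$ and $u$ enter through the reverse Hölder-type inequality $\sigma(Q)\lesssim\sigma(E_Q)$-type Carleson sums, $\sum_{Q'\subset Q}\sigma(Q')\lesssim[\sigma]_{A_\infty}\sigma(Q)$. The case $\widetilde p<\widetilde q$ is handled by the $\ell^{\widetilde p}\subset\ell^{\widetilde q}$ embedding.
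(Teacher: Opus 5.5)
Your proposal follows the paper's proof. Both apply Theorem~\ref{thm:main} with parameter $s$, pass to $M_{1,L^s}g=(M_s(g^s))^{1/s}$ via \eqref{eq:sparse-vs-max} with $u\in A_\infty$, and for (ii) use the Cruz-Uribe--Moen two-weight bound (through $M_s\lesssim I_s$) under $[u,\sigma]_{\widetilde p,\widetilde q}<\infty$ and $u,\sigma\in A_\infty$. For (i) you deviate slightly: you prove the weak-type bound for $M_s$ directly from the joint condition by the covering and H\"older argument, instead of quoting the weak-type part of Cruz-Uribe--Moen for $I_s$; this is correct and a little more elementary, and neither the abandoned linearization paragraph nor the duality fallback is needed.
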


\begin{proof}
We use the standard pointwise relation
\begin{equation}\label{eq:max-Riesz-relation}
M_{1,L^s}(|\nabla f|)
=
\bigl(M_s(|\nabla f|^s)\bigr)^{1/s}
\le
c(n,s)\bigl(I_s(|\nabla f|^s)\bigr)^{1/s};
\end{equation}
see, for instance, \cite[Section~5]{HMP_funct_anal}.

Recall that
\[
\widetilde p=\frac ps,
\qquad
\widetilde q=\frac qs,
\qquad
\sigma=v^{1-\widetilde p'}.
\]
Then
\[
1<\widetilde p<\frac ns,
\qquad
\frac1{\widetilde p}-\frac1{\widetilde q}=\frac sn.
\]
Moreover, the assumption \eqref{eq:mixed-two-weight} gives
\[
[u,\sigma]_{\widetilde p,\widetilde q}^{\widetilde q}
=
\sup_Q
\Bigl(\fint_Q u\Bigr)
\Bigl(\fint_Q\sigma\Bigr)^{\widetilde q/\widetilde p'}
<\infty.
\]
Thus $(u,\sigma)$ satisfies the joint weight condition in
\cite[Theorem~1.4]{Cruz-UribeMoen}, with $\alpha=s$.

Assume first that $u,\sigma\in A_\infty$. By
\cite[Theorem~1.4]{Cruz-UribeMoen},
\[
\bigl\|I_s(F\sigma)\bigr\|_{L^{\widetilde q}(u)}
\le
C\,\|F\|_{L^{\widetilde p}(\sigma)},
\qquad F\ge0.
\]
Taking
\[
F=|\nabla f|^s\sigma^{-1},
\]
and using
\[
\sigma^{1-\widetilde p}=v,
\qquad
s\widetilde p=p,
\]
we obtain
\begin{align*}
\bigl\|I_s(|\nabla f|^s)\bigr\|_{L^{\widetilde q}(u)}
&\le
C
\Bigl(
\int_{\mathbb R^n}
|\nabla f|^{s\widetilde p}
\sigma^{1-\widetilde p}\dd x
\Bigr)^{1/\widetilde p}
\\
&=
C
\Bigl(
\int_{\mathbb R^n}
|\nabla f|^p v\dd x
\Bigr)^{1/\widetilde p}
=
C\,\|\nabla f\|_{L^p(v)}^s.
\end{align*}

On the other hand, Theorem~\ref{thm:main}, applied with parameter $s$,
and the strong-type estimate in \eqref{eq:sparse-vs-max} give
\begin{align*}
\|T^\ast_\Omega f\|_{L^q(u)}
&\le
c\,\|\Omega\|_{L^\rho(\Sn)}
\sum_{t\in\{0,1/3\}^n}
\bigl\|I^{\mathcal S_t}_{1,L^s}(|\nabla f|)\bigr\|_{L^q(u)}
\\
&\le
C\,\|\Omega\|_{L^\rho(\Sn)}
\bigl\|M_{1,L^s}(|\nabla f|)\bigr\|_{L^q(u)}.
\end{align*}
Since $\widetilde q=q/s$, \eqref{eq:max-Riesz-relation} yields
\[
\bigl\|M_{1,L^s}(|\nabla f|)\bigr\|_{L^q(u)}
\le
C
\bigl\|I_s(|\nabla f|^s)\bigr\|_{L^{\widetilde q}(u)}^{1/s}.
\]
Consequently,
\[
\|T^\ast_\Omega f\|_{L^q(u)}
\le
C\,\|\Omega\|_{L^\rho(\Sn)}
\|\nabla f\|_{L^p(v)},
\]
which proves (ii).

For (i), assume only that $u\in A_\infty$. The weak-type part of
\cite[Theorem~1.4]{Cruz-UribeMoen} gives
\[
\bigl\|I_s(F\sigma)\bigr\|_{L^{\widetilde q,\infty}(u)}
\le
C\,\|F\|_{L^{\widetilde p}(\sigma)}.
\]
With the same choice
$F=|\nabla f|^s\sigma^{-1}$, it follows that
\[
\bigl\|I_s(|\nabla f|^s)\bigr\|_{L^{\widetilde q,\infty}(u)}
\le
C\,\|\nabla f\|_{L^p(v)}^s.
\]
Using Theorem~\ref{thm:main}, the weak-type estimate in
\eqref{eq:sparse-vs-max}, and the finite-sum inequality in
$L^{q,\infty}(u)$, we obtain
\[
\|T^\ast_\Omega f\|_{L^{q,\infty}(u)}
\le
C\,\|\Omega\|_{L^\rho(\Sn)}
\bigl\|M_{1,L^s}(|\nabla f|)\bigr\|_{L^{q,\infty}(u)}.
\]
Finally, by \eqref{eq:max-Riesz-relation},
\[
\bigl\|M_{1,L^s}(|\nabla f|)\bigr\|_{L^{q,\infty}(u)}
\le
C
\bigl\|I_s(|\nabla f|^s)\bigr\|_{L^{\widetilde q,\infty}(u)}^{1/s},
\]
and hence
\[
\|T^\ast_\Omega f\|_{L^{q,\infty}(u)}
\le
C\,\|\Omega\|_{L^\rho(\Sn)}
\|\nabla f\|_{L^p(v)}.
\]
This proves (i).
\end{proof}\subsection{Two-weight inequalities under Sawyer testing}

\begin{theorem}\label{thm:two-weight-sawyer}
Assume \eqref{eq:standing-Omega}, \eqref{eq:Sobolev-pair} and \eqref{eq:two-weight-s-range}, set
\[
\widetilde p=\frac ps,\qquad
\widetilde q=\frac qs,\qquad
\sigma=v^{1-\widetilde p'}=v^{-\frac{s}{p-s}},
\]
and let $u\in A_\infty$ and $v$ be weights such that $\sigma$ is a
weight and the testing conditions
\begin{equation}\label{eq:Sawyer-T1}
\Bigl(\int_Q \bigl[I_{s}(\sigma\1_Q)\bigr]^{\widetilde q}\,u\Bigr)^{1/\widetilde q}
\le C_1\,\sigma(Q)^{1/\widetilde p},
\end{equation}
and
\begin{equation}\label{eq:Sawyer-T2}
\Bigl(\int_Q \bigl[I_{s}(u\1_Q)\bigr]^{\widetilde p'}\,\sigma\Bigr)^{1/\widetilde p'}
\le C_2\,u(Q)^{1/\widetilde q'},
\end{equation}
hold for every cube $Q$. Then, for every $f\in C_c^\infty(\mathbb R^n)$,
\[
\|T^\ast_\Omega f\|_{L^{q}(u)}\le C\,\|\Omega\|_{L^\rho(\Sn)}\,\|\nabla f\|_{L^p(v)},
\]
with $C$ depending on $n,p,q,\rho,s$, $C_1$, $C_2$ and $[u]_{A_\infty}$.
\end{theorem}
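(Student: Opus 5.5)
The argument follows the proof of Theorem~\ref{thm:mixed-two-weight}(ii), with the Cruz-Uribe--Moen joint condition replaced by Sawyer's characterization of the two-weight strong-type inequality for the Riesz potential \cite{Sawyer1988}.

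\emph{Step 1: sparse domination.} First apply Theorem~\ref{thm:main} with parameter $s$, which is admissible since $\widetilde\rho\le s<p<n$. This gives
\[
T^\ast_\Omega f\le c\,\|\Omega\|_{L^\rho(\Sn)}\sum_{t\in\{0,1/3\}^n}I^{\mathcal S_t}_{1,L^s}(|\nabla f|).
\]

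\emph{Step 2: pass to the fractional maximal function.} Since $u\in A_\infty$, the strong-type estimate in \eqref{eq:sparse-vs-max} with $r=q$ replaces each sparse operator by $M_{1,L^s}(|\nabla f|)$ in $L^q(u)$, with constant independent of $\mathcal S_t$. This is the only place where $u\in A_\infty$ enters, and it is the reason no $A_\infty$ hypothesis on $\sigma$ is needed.

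\emph{Step 3: pass to the Riesz potential.} By \eqref{eq:max-Riesz-relation} and $\widetilde q=q/s$,
\[
\|M_{1,L^s}(|\nabla f|)\|_{L^q(u)}\le C\,\|I_s(|\nabla f|^s)\|_{L^{\widetilde q}(u)}^{1/s}.
\]

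\emph{Step 4: the two-weight estimate for $I_s$.} Use Sawyer's theorem for $I_s$ with $0<s<n$ and $1<\widetilde p<\widetilde q<\infty$ (indeed $\widetilde p=p/s>1$ and $\widetilde q>\widetilde p$). Conditions \eqref{eq:Sawyer-T1} and \eqref{eq:Sawyer-T2} are exactly its forward and dual testing conditions. They yield
\[
\|I_s(F\sigma)\|_{L^{\widetilde q}(u)}\le C(C_1,C_2,n,s,\widetilde p,\widetilde q)\,\|F\|_{L^{\widetilde p}(\sigma)},\qquad F\ge0.
\]

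\emph{Step 5: substitution.} Take $F=|\nabla f|^s\sigma^{-1}$, and use $\sigma^{1-\widetilde p}=v$ and $s\widetilde p=p$ exactly as in the proof of Theorem~\ref{thm:mixed-two-weight}. This gives
\[
\|I_s(|\nabla f|^s)\|_{L^{\widetilde q}(u)}\le C\,\|\nabla f\|_{L^p(v)}^s.
\]
Taking $1/s$ powers and chaining Steps 1--5 then yields the theorem.

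\emph{Main obstacle.} The delicate point is invoking Sawyer's result in the right form. It is classically stated for measures and the continuous $I_\alpha$, and its testing conditions are sometimes phrased with the dual operator integrated over all of $\mathbb R^n$ rather than over $Q$. In the off-diagonal case $\widetilde p<\widetilde q$ the localized versions \eqref{eq:Sawyer-T1} and \eqref{eq:Sawyer-T2} suffice, and I would cite that form.

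A secondary technical point is that $F=|\nabla f|^s\sigma^{-1}$ need not lie in $L^{\widetilde p}(\sigma)$ a priori. If $\|\nabla f\|_{L^p(v)}=\infty$ there is nothing to prove. Otherwise $\int F^{\widetilde p}\sigma=\int|\nabla f|^p v<\infty$, so the inequality applies by monotone convergence, truncating $F$ if necessary.
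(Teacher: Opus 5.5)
Your proof is correct and follows the paper's argument step for step: Theorem~\ref{thm:main} with parameter $s$, then the $A_\infty$ sparse-to-maximal comparison \eqref{eq:sparse-vs-max} for $u$, then the pointwise bound $M_{1,L^s}h\lesssim (I_s(|h|^s))^{1/s}$, and finally Sawyer's testing theorem for $I_s$, which the paper cites via \cite[Theorem~1.6]{Cruz-UribeMoen}. The only cosmetic difference is that you state Sawyer's inequality as $\|I_s(F\sigma)\|_{L^{\widetilde q}(u)}\lesssim\|F\|_{L^{\widetilde p}(\sigma)}$ and substitute $F=|\nabla f|^s\sigma^{-1}$, whereas the paper uses the equivalent form $\|I_sF\|_{L^{\widetilde q}(u)}\lesssim\|F\|_{L^{\widetilde p}(v)}$ directly with $F=|\nabla f|^s$.
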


\begin{proof}
By Theorem~\ref{thm:main}, applied with parameter $s$, and
\eqref{eq:sparse-vs-max},
\begin{align*}
\|T^\ast_\Omega f\|_{L^q(u)}
&\le
c\,\|\Omega\|_{L^\rho(\Sn)}
\sum_{t\in\{0,1/3\}^n}
\bigl\|I^{\mathcal S_t}_{1,L^s}(|\nabla f|)\bigr\|_{L^q(u)}
\\
&\le
C\,\|\Omega\|_{L^\rho(\Sn)}
\bigl\|M_{1,L^s}(|\nabla f|)\bigr\|_{L^q(u)}.
\end{align*}
Here we have used $u\in A_\infty$.

The standard relation
\[
M_{1,L^s}h
=
\bigl(M_s(|h|^s)\bigr)^{1/s}
\le
C\bigl(I_s(|h|^s)\bigr)^{1/s}
\]
gives
\[
\bigl\|M_{1,L^s}(|\nabla f|)\bigr\|_{L^q(u)}
\le
C
\bigl\|I_s(|\nabla f|^s)\bigr\|_{L^{\widetilde q}(u)}^{1/s}.
\]

Since
\[
1<\widetilde p\le\widetilde q<\infty,
\qquad
\sigma=v^{1-\widetilde p'},
\]
the testing conditions \eqref{eq:Sawyer-T1}--\eqref{eq:Sawyer-T2}
and Sawyer's two-weight theorem, in the form of
\cite[Theorem~1.6]{Cruz-UribeMoen} with $\alpha=s$, yield
\[
\|I_s F\|_{L^{\widetilde q}(u)}
\le
C\|F\|_{L^{\widetilde p}(v)},
\qquad F\ge0.
\]
Applying this estimate to $F=|\nabla f|^s$ and using
$s\widetilde p=p$, we obtain
\[
\bigl\|I_s(|\nabla f|^s)\bigr\|_{L^{\widetilde q}(u)}
\le
C
\bigl\||\nabla f|^s\bigr\|_{L^{\widetilde p}(v)}
=
C\|\nabla f\|_{L^p(v)}^s.
\]
Consequently,
\[
\|T^\ast_\Omega f\|_{L^q(u)}
\le
C\,\|\Omega\|_{L^\rho(\Sn)}
\|\nabla f\|_{L^p(v)},
\]
as required.
\end{proof}

By the Lorentz refinement following Theorem~\ref{thm:main}, all the
results of this section remain valid under the weaker assumption
$\Omega\in L^{\rho,\rho^\ast}(\Sn)$, with
$\|\Omega\|_{L^\rho(\Sn)}$ replaced by
$\|\Omega\|_{L^{\rho,\rho^\ast}(\Sn)}$.
\section{Orlicz spaces}\label{sec:orlicz}

In this section we prove a weighted modular domination of
$T^\ast_\Omega$ by the fractional maximal operator $M_{1,L^p}$
in Orlicz spaces.

A \emph{Young function} is a continuous, convex and strictly increasing
function $\Phi:[0,\infty)\to[0,\infty)$ such that
\[
\Phi(0)=0,\qquad
\lim_{t\to0^+}\frac{\Phi(t)}{t}=0,
\qquad
\lim_{t\to\infty}\frac{\Phi(t)}{t}=\infty.
\] Its Matuszewska--Orlicz indices are
\[
i_\Phi:=\sup_{0<\lambda<1}\frac{\log\bigl(\sup_{t>0}\Phi(\lambda t)/\Phi(t)\bigr)}{\log\lambda},
\qquad
I_\Phi:=\inf_{\lambda>1}\frac{\log\bigl(\sup_{t>0}\Phi(\lambda t)/\Phi(t)\bigr)}{\log\lambda}.
\]

The condition $1<i_\Phi\le I_\Phi<\infty$ implies that $\Phi$
satisfies the $\Delta_2$ condition. Hence, for every $N>1$,
\begin{equation}\label{eq:Delta2}
C_{\Phi,N}:=\sup_{t>0}\frac{\Phi(Nt)}{\Phi(t)}<\infty.
\end{equation}

Given a weight $w$, the weighted Orlicz space $L^\Phi(w)$ is defined through the Luxemburg norm
\[
\|h\|_{L^\Phi(w)}:=\inf\Bigl\{\lambda>0:\ \int_{\mathbb R^n}\Phi\Bigl(\frac{|h(x)|}{\lambda}\Bigr)w(x)\dd x\le1\Bigr\}.
\]

\begin{theorem}[Modular domination by the fractional maximal operator]\label{thm:orlicz-maximal}
Let $n\ge2$, let $\Omega$ satisfy \eqref{eq:standing-Omega} with
$\Omega\not\equiv0 $, and let $\widetilde\rho\le p<n$. Let $\Phi$ be a Young function with $1<i_\Phi\le I_\Phi<\infty$ and let $w\in A_{i_\Phi}$. Then there exist $c_0=c_0(n,p,\rho)$ and $C=C\bigl(n,p,\Phi,[w]_{A_{i_\Phi}}\bigr)$ such that, for every $f\in C_c^\infty(\mathbb R^n)$,
\begin{equation}\label{eq:orlicz-domination}
\int_{\mathbb R^n}\Phi\Bigl(\frac{T^\ast_\Omega f(x)}{c_0\,\|\Omega\|_{L^\rho(\Sn)}}\Bigr)\,w(x)\dd x
\le
C\int_{\mathbb R^n}\Phi\bigl(M_{1,L^p}(|\nabla f|)(x)\bigr)\,w(x)\dd x.
\end{equation}
\end{theorem}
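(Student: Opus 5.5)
The plan is to combine the pointwise bound of Theorem~\ref{thm:main} with a modular version of the sparse-versus-maximal comparison \eqref{eq:sparse-vs-max}. Throughout we write $g:=|\nabla f|\in L^\infty_c(\mathbb R^n)$.

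First we normalize the constant. By Theorem~\ref{thm:main} there are sparse families $\mathcal S_t\subseteq\mathcal D^t$ with
\[
T^\ast_\Omega f\le c(n,p,\rho)\|\Omega\|_{L^\rho(\Sn)}\sum_t I^{\mathcal S_t}_{1,L^p}g .
\]
We set $c_0:=2^n c(n,p,\rho)$; the hypothesis $\Omega\not\equiv0$ makes the normalization meaningful. Then
\[
\frac{T^\ast_\Omega f}{c_0\|\Omega\|_{L^\rho}}\le 2^{-n}\sum_t I^{\mathcal S_t}_{1,L^p}g .
\]
Since $\Phi$ is increasing and convex, Jensen's inequality for the average over the $2^n$ values of $t$ gives
\[
\Phi\Bigl(\frac{T^\ast_\Omega f}{c_0\|\Omega\|}\Bigr)\le 2^{-n}\sum_t\Phi\bigl(I^{\mathcal S_t}_{1,L^p}g\bigr).
\]
It therefore suffices to prove, uniformly in the sparse family $\mathcal S$,
\begin{equation*}
\int_{\mathbb R^n}\Phi\bigl(I^{\mathcal S}_{1,L^p}g\bigr)\,w\le C\int_{\mathbb R^n}\Phi\bigl(M_{1,L^p}g\bigr)\,w .
\end{equation*}
Only $w\in A_\infty$ will be used here, and this holds because $A_{i_\Phi}\subset A_\infty$.

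For this modular inequality I would use a good-$\lambda$ inequality, which is the mechanism behind \cite[Theorem~5.1]{HMP_funct_anal}. The target estimate is: there exist $\delta>0$ and $C$, depending on $n,p,[w]_{A_\infty}$, such that for all $\lambda>0$ and all small $\gamma>0$,
\[
w\bigl(\{I^{\mathcal S}_{1,L^p}g>2\lambda,\ M_{1,L^p}g\le\gamma\lambda\}\bigr)\le C\gamma^{\delta}\,w\bigl(\{I^{\mathcal S}_{1,L^p}g>\lambda\}\bigr).
\]
The proof proceeds in three steps.
\begin{enumerate}
\item Decompose $\{I^{\mathcal S}_{1,L^p}g>\lambda\}$ into maximal dyadic cubes $R$ of the grid.
\item On each $R$, split the sparse sum into cubes containing $R$ and cubes contained in $R$. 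The first part is at most $\lambda$ plus a tail controlled by $M_{1,L^p}g\le\gamma\lambda$.
\item For the local part, use the sparseness (Carleson packing) to obtain a Lebesgue-measure estimate of the form $\lesssim\gamma^{c}|R|$, with an exponential or at least power decay. The $A_\infty$ property of $w$ then converts this into a $w$-measure estimate with the factor $\gamma^\delta$.
\end{enumerate}

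Granted the good-$\lambda$ inequality, we write $\int\Phi(F)w=\int_0^\infty\Phi'(\lambda)\,w(\{F>\lambda\})\,d\lambda$ with $F=I^{\mathcal S}_{1,L^p}g$ and $G=M_{1,L^p}g$, and split $\{F>2\lambda\}$ using the good-$\lambda$ inequality. This yields
\[
\int\Phi(F/2)w\le C\gamma^\delta\int\Phi(F)w+\int\Phi(G/\gamma)w .
\]
By \eqref{eq:Delta2} we have $\int\Phi(F)w\le C_{\Phi,2}\int\Phi(F/2)w$ and $\int\Phi(G/\gamma)w\le C_{\Phi,1/\gamma}\int\Phi(G)w$. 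Choosing $\gamma$ so small that $C\gamma^\delta C_{\Phi,2}\le\frac12$ lets us absorb the first term, provided $\int\Phi(F)w<\infty$.

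The main obstacle is exactly this finiteness requirement needed to absorb, together with establishing the good-$\lambda$ inequality with a constant uniform in $\mathcal S$. For the finiteness I would first replace $\mathcal S$ by its finite subfamilies of cubes with $2^{-N}\le\ell(Q)\le 2^N$ and pass to the limit by monotone convergence. For a finite family and $g\in L^\infty_c$, the sum $F$ is bounded and decays like $|x|^{1-n}$ at infinity. Finiteness of $\int\Phi(F)w$ then follows from $I_\Phi<\infty$, which gives $\Phi(t)\lesssim t^{i}$ for small $t$ with any $i<i_\Phi$, combined with the polynomial growth of $w(B(0,R))$ for $w\in A_{i_\Phi}$; here the hypothesis $w\in A_{i_\Phi}$, rather than merely $A_\infty$, may actually be needed. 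If this route becomes delicate, the fallback is to derive the modular inequality by Rubio de Francia-type $A_\infty$ extrapolation, in the form of Curbera--Garc\'ia-Cuerva--Martell--P\'erez, from the family of $L^r(w)$ estimates \eqref{eq:sparse-vs-max} valid for all $0<r<\infty$ and all $w\in A_\infty$. That extrapolation yields modular inequalities for Young functions with $\Delta_2$ directly.
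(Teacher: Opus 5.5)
Your proposal is correct, but it reaches the key modular inequality by a different route from the paper. The paper also starts from Theorem~\ref{thm:main}. It keeps $c_0=c(n,p,\rho)$ and absorbs the sum over $t$ with the $\Delta_2$ constant $C_{\Phi,2^n}$; your choice $c_0=2^nc$ plus Jensen is slightly cleaner. It then obtains $\int\Phi(I^{\mathcal S}_{1,L^p}h)\,w\le C\int\Phi(M_{1,L^p}h)\,w$ in one line, by feeding the bounds \eqref{eq:sparse-vs-max} for $v\in A_r$, $1<r<\infty$, into the modular extrapolation theorem \cite[Theorem~4.15]{CMP}. That citation is exactly where $1<i_\Phi$ and $w\in A_{i_\Phi}$ are used.

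You instead prove this comparison directly by a good-$\lambda$ inequality, or alternatively by $A_\infty$ extrapolation in the form of Curbera--Garc\'{\i}a-Cuerva--Martell--P\'erez. This is longer but self-contained, and it gives more. It uses only $w\in A_\infty$ and the doubling of $\Phi$, so it yields \eqref{eq:orlicz-domination} for every $w\in A_\infty$ and every Young function with $I_\Phi<\infty$, with no condition on $i_\Phi$. That is natural, since \eqref{eq:sparse-vs-max} is itself an $A_\infty$ estimate valid for all $0<r<\infty$. The paper's route is shorter, but it only gives the $A_{i_\Phi}$ statement.

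Two details in your sketch need repair.

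\textbf{Step~3.} The operative input is not sparseness but a local weak-type bound. By Hedberg's splitting and the weak $(1,1)$ bound for $M$ applied to $g^p$, $I^{\mathcal D}_{1,L^p}$ maps $L^p$ into $L^{np/(n-p),\infty}$. Apply this to $g\1_R$, together with $\ell(R)(\fint_R g^p)^{1/p}\le M_{1,L^p}g(x_0)\le\gamma\lambda$ for some $x_0\in R$. This gives a bound $\lesssim\gamma^{np/(n-p)}|R|$ for the set in $R$ where the local part of the sparse sum exceeds $\lambda/2$.

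\textbf{Finiteness.} Your finiteness argument is wrong as written. The untruncated sum decays like $|x|^{1-n/p}$, not $|x|^{1-n}$. Growth bounds for $w(B(0,R))$ cannot make such tails $\Phi$-integrable in general: already for $w\equiv1$ and $\Phi(t)=t^r$ with $r\le np/(n-p)$, one has $\int_{|x|>1}|x|^{(1-n/p)r}\dd x=\infty$.

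Your own truncation, however, settles this for free. The cubes with $\ell(Q)\le 2^N$ and nonzero coefficient all meet $\operatorname{supp} g$. Hence the truncated sum is bounded and compactly supported, and so it is $\Phi$-integrable against any locally integrable $w$. Absorption and monotone convergence then finish the proof, and $w\in A_{i_\Phi}$ is never needed.
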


\begin{proof}
Let $n\ge2$, and let $\widetilde\rho\le p<n$.
Fix $f\in C_c^\infty(\mathbb R^n)$. By Theorem~\ref{thm:main}, applied with
integrability parameter $p$, there exist sparse families
$\mathcal S_t\subseteq\mathcal D^t$, $t\in\{0,\frac13\}^n$, such that
\begin{equation}\label{eq:proof-pointwise}
T^\ast_\Omega f(x)
\le c_0\,\|\Omega\|_{L^\rho(\Sn)}
\sum_{t\in\{0,1/3\}^n}I^{\mathcal S_t}_{1,L^p}(|\nabla f|)(x),
\qquad x\in\mathbb R^n,
\end{equation}
with $c_0=c_0(n,p,\rho)$. Since $\Phi$ is increasing and convex, with $\Phi(0)=0$,
and since the sum in \eqref{eq:proof-pointwise} runs over the $2^{n}$ elements of
$\{0,\frac13\}^n$, we obtain from \eqref{eq:proof-pointwise} and \eqref{eq:Delta2}
\begin{align}
\int_{\mathbb R^n}\Phi\Bigl(\frac{T^\ast_\Omega f(x)}{c_0\,\|\Omega\|_{L^\rho(\Sn)}}\Bigr)
w(x)\dd x
&\le\int_{\mathbb R^n}
\Phi\Bigl(\;\sum_{t\in\{0,1/3\}^n}I^{\mathcal S_t}_{1,L^p}(|\nabla f|)(x)\Bigr)w(x)\dd x
\notag\\
&\le\frac1{2^{n}}\sum_{t\in\{0,1/3\}^n}
\int_{\mathbb R^n}\Phi\Bigl(2^{n}\,I^{\mathcal S_t}_{1,L^p}(|\nabla f|)(x)\Bigr)w(x)\dd x
\notag\\
&\le\frac{C_{\Phi,2^{n}}}{2^{n}}\sum_{t\in\{0,1/3\}^n}
\int_{\mathbb R^n}\Phi\bigl(I^{\mathcal S_t}_{1,L^p}(|\nabla f|)(x)\bigr)w(x)\dd x.
\label{eq:proof-finite-sum}
\end{align}
Consider now the family $\mathcal F$ of pairs of nonnegative functions
\[
\mathcal F=\Bigl\{\bigl(I^{\mathcal S}_{1,L^p}h,\ M_{1,L^p}h\bigr):\
h\ge0,\ \mathcal S\ \text{a sparse family in a dyadic grid}\Bigr\}.
\]
By \eqref{eq:sparse-vs-max}, for every $r\in(1,+\infty)$, every
$v\in A_r\subset A_\infty$ and every pair in $\mathcal F$,
\begin{equation*}
\int_{\mathbb R^n}\bigl(I^{\mathcal S}_{1,L^p}h(x)\bigr)^{r}v(x)\dd x
\le C_r\int_{\mathbb R^n}\bigl(M_{1,L^p}h(x)\bigr)^{r}v(x)\dd x,
\end{equation*}
with $C_r=C_r(n,p,r,[v]_{A_\infty})$. The constant is independent of
the pair and, in particular, of the sparse family $\mathcal S$.
 The family $\mathcal F$
therefore satisfies the hypotheses of the modular extrapolation theorem
\cite[Theorem~4.15]{CMP},
and since $1<i_\Phi\le I_\Phi<+\infty$ and $w\in A_{i_\Phi}$ we deduce that
\begin{equation}\label{eq:modular-comparison-sparse-max}
\int_{\mathbb R^n}\Phi\bigl(I^{\mathcal S}_{1,L^p}h(x)\bigr)w(x)\dd x
\le C\int_{\mathbb R^n}\Phi\bigl(M_{1,L^p}h(x)\bigr)w(x)\dd x
\end{equation}
for every pair in $\mathcal F$, with $C=C(n,p,\Phi,[w]_{A_{i_\Phi}})$. Applying
\eqref{eq:modular-comparison-sparse-max} with $h=|\nabla f|$ to each of the $2^{n}$
terms of \eqref{eq:proof-finite-sum}, we conclude that
\begin{align*}
\int_{\mathbb R^n}\Phi\Bigl(\frac{T^\ast_\Omega f(x)}{c_0\,\|\Omega\|_{L^\rho(\Sn)}}\Bigr)
w(x)\dd x
&\le\frac{C_{\Phi,2^{n}}}{2^{n}}\sum_{t\in\{0,1/3\}^n}
C\int_{\mathbb R^n}\Phi\bigl(M_{1,L^p}(|\nabla f|)(x)\bigr)w(x)\dd x\\
&=C_{\Phi,2^{n}}\,C
\int_{\mathbb R^n}\Phi\bigl(M_{1,L^p}(|\nabla f|)(x)\bigr)w(x)\dd x,
\end{align*}
which is \eqref{eq:orlicz-domination}.
\end{proof}

\begin{remark}\label{rem:orlicz-scope}
For every $w\in A_{i_\Phi}$, there are no constants $C,c>0$ such that
\[
\int_{\mathbb R^n}\Phi\bigl(M_{1,L^p}h\bigr)w
\le
C\int_{\mathbb R^n}\Phi(c|h|)w
\]
for all measurable $h$.

Indeed, let $Q$ be a cube and take $h=\1_Q$. For every $x\in Q$,
\[
M_{1,L^p}(\1_Q)(x)\ge \ell(Q).
\]
It follows that
\[
\Phi(\ell(Q))\,w(Q)
\le
C\Phi(c)\,w(Q).
\]
Since $w(Q)>0$,
\[
\Phi(\ell(Q))\le C\Phi(c).
\]
Choosing cubes with $\ell(Q)\to\infty$ contradicts
$\Phi(t)\to\infty$.

Thus Theorem~\ref{thm:orlicz-maximal} cannot be combined with a
same-space modular estimate for $M_{1,L^p}$ to obtain an
Orlicz--Sobolev inequality.
\end{remark}
\section{Variable Lebesgue spaces}\label{sec:variable}

Variable Lebesgue spaces $L^{p(\cdot)}$ go back to Orlicz. Their modern
development began in the 1990s, with applications including
electrorheological fluids \cite{Ruzicka2000} and image restoration
\cite{ChenLevineRao}. We refer to the monographs of Cruz-Uribe and
Fiorenza \cite{CUF} and of Diening, Harjulehto, H\"ast\"o and
R\r u\v zi\v cka \cite{DHHR}.

Given a measurable function $p:\mathbb R^n\to[1,\infty)$ with
\[
p_-:=\operatorname*{ess\,inf}_{x\in\mathbb R^n}p(x)>1,
\qquad
p_+:=\operatorname*{ess\,sup}_{x\in\mathbb R^n}p(x)<\infty,
\]
the space $L^{p(\cdot)}(\mathbb R^n)$ consists of the measurable functions $h$ for which the modular 
\[
\int_{\mathbb R^n}|h(x)/\lambda|^{p(x)}\dd x
\]
is finite for some $\lambda>0$, normed by
\[
\|h\|_{L^{p(\cdot)}(\mathbb R^n)}:=\inf\Bigl\{\lambda>0:\ \int_{\mathbb R^n}\Bigl|\frac{h(x)}{\lambda}\Bigr|^{p(x)}\dd x\le1\Bigr\}.
\]
We say that $p(\cdot)$ is \emph{globally log-H\"older continuous} if there are constants $C_0,C_\infty>0$ and $p_\infty\in\mathbb R$ such that
\begin{equation}\label{eq:log-Holder}
|p(x)-p(y)|\le\frac{C_0}{\log\bigl(e+\frac1{|x-y|}\bigr)}
\quad\text{and}\quad
|p(x)-p_\infty|\le\frac{C_\infty}{\log(e+|x|)}
\qquad\text{for all }x,y\in\mathbb R^n.
\end{equation}
Under \eqref{eq:log-Holder}, the Hardy--Littlewood maximal operator and the Riesz potentials are bounded on the appropriate variable Lebesgue spaces; see \cite[Chapter~5]{CUF} and \cite[Chapter~6]{DHHR}.

For us, the relevance is the following. In the critical regime $\Omega\in L^{n,\infty}(\Sn)$, the pointwise bound \eqref{eq:HMP-critical} of \cite{HMP_JAM} reduces any variable-exponent Sobolev inequality for $T_\Omega$ to the corresponding mapping property of the single operator $I_1$, which is classical. In the subcritical regime the dominating objects are the sparse operators $I^{\mathcal S}_{1,L^s}$, and the reduction is equally automatic once uniform sparse bounds are available; the distinction between the two situations is structural rather than technical, as emphasized in \cite{HMP_JAM,HMP_funct_anal}. We state both consequences.

\begin{theorem}[Critical regime]\label{thm:variable-critical}
Let $n\ge2$ and let $\Omega\in L^{n,\infty}(\Sn)$ with $\int_{\Sn}\Omega\dd\sigma=0$. Let $p(\cdot)$ be globally log-H\"older continuous with $1<p_-\le p_+<n$, and define $q(\cdot)$ by
\[
\frac1{q(x)}=\frac1{p(x)}-\frac1n,
\qquad x\in\mathbb R^n.
\]
Then, for every $f\in C_c^\infty(\mathbb R^n)$,
\[
\|T^\ast_\Omega f\|_{L^{q(\cdot)}(\mathbb R^n)}
\le
C\,\|\Omega\|_{L^{n,\infty}(\Sn)}\,\|\nabla f\|_{L^{p(\cdot)}(\mathbb R^n)},
\]
where $C>0$ depends only on $n$ and on the exponent function $p(\cdot)$.
\end{theorem}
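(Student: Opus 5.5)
The plan is to reduce everything to the boundedness of the Riesz potential $I_1$ on variable Lebesgue spaces, using the pointwise bound \eqref{eq:HMP-critical} of \cite{HMP_JAM}. For $f\in C_c^\infty(\mathbb R^n)$ and every $x\in\mathbb R^n$ this gives
\[
T^\ast_\Omega f(x)\le c_n\,\|\Omega\|_{L^{n,\infty}(\Sn)}\,I_1(|\nabla f|)(x).
\]
The right-hand side no longer depends on the truncation. The variable Lebesgue norm is a lattice norm: $0\le F\le G$ implies $\|F\|_{L^{q(\cdot)}}\le\|G\|_{L^{q(\cdot)}}$, directly from the Luxemburg definition. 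Taking the $L^{q(\cdot)}$ norm of both sides therefore yields
\[
\|T^\ast_\Omega f\|_{L^{q(\cdot)}}\le c_n\|\Omega\|_{L^{n,\infty}(\Sn)}\,\|I_1(|\nabla f|)\|_{L^{q(\cdot)}}.
\]

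The second step invokes the variable-exponent Hardy--Littlewood--Sobolev theorem with $\alpha=1$. Let $p(\cdot)$ be globally log-H\"older continuous with $1<p_-\le p_+<n/\alpha=n$, and let $1/q(\cdot)=1/p(\cdot)-1/n$. Then $I_1$ is bounded from $L^{p(\cdot)}(\mathbb R^n)$ to $L^{q(\cdot)}(\mathbb R^n)$, with constant depending only on $n$ and on $p(\cdot)$ (through $p_\pm$, $C_0$, $C_\infty$ and $p_\infty$). This is the result cited in \cite[Chapter~5]{CUF} and \cite[Chapter~6]{DHHR}. Applying it to $h=|\nabla f|\in L^{p(\cdot)}$ (smooth and compactly supported, hence in every $L^{p(\cdot)}$), and noting $\||\nabla f|\|_{L^{p(\cdot)}}=\|\nabla f\|_{L^{p(\cdot)}}$, gives the claim.

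The only point to check is that the hypotheses match the cited theorem exactly. Some formulations assume log-H\"older continuity of $p(\cdot)$ together with the decay condition at infinity, and $p_+<n$; all of these are assumed in the statement. One should also confirm that $q(\cdot)$ inherits the required regularity. It does, since $1/q=1/p-1/n$ is log-H\"older whenever $1/p$ is, and log-H\"older continuity of $p$ and of $1/p$ are equivalent when $p_->1$ and $p_+<\infty$.

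There is no genuine obstacle: the difficulty has been absorbed into \eqref{eq:HMP-critical} and the classical variable-exponent potential theory. The constant is $c_n$ times the operator norm of $I_1$ between these spaces, which depends only on $n$ and $p(\cdot)$, as stated.
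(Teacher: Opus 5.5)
Your proposal is correct and follows the paper's own proof. Both arguments take the pointwise domination by $I_1(|\nabla f|)$ from \eqref{eq:HMP-critical}, pass it to the norm through the lattice property and homogeneity of the Luxemburg norm, and conclude with the variable-exponent Hardy--Littlewood--Sobolev bound for $I_1$ from \cite{CUF,DHHR}.
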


\begin{proof}
Let $f\in C_c^\infty(\mathbb R^n)$, and note that $|\nabla f|$ is bounded and
compactly supported, so that $|\nabla f|\in L^{p(\cdot)}(\mathbb R^n)$. 

By the
pointwise bound \eqref{eq:HMP-critical} and the monotonicity of the function
$t\mapsto t^{q(x)}$ on $[0,+\infty)$, we have, for every $\lambda>0$,
\begin{equation*}
\int_{\mathbb R^n}\Bigl(\frac{|T^*_\Omega f(x)|}{\lambda}\Bigr)^{q(x)}\dd x
\le
\int_{\mathbb R^n}
\Bigl(\frac{c_n\,\|\Omega\|_{L^{n,\infty}(\Sn)}\,I_1(|\nabla f|)(x)}{\lambda}\Bigr)^{q(x)}\dd x.
\end{equation*}
 Hence, by the definition of the Luxemburg norm and its homogeneity,
\begin{align*}
\|T^*_\Omega f\|_{L^{q(\cdot)}(\mathbb R^n)}
&\le
\bigl\|\,c_n\,\|\Omega\|_{L^{n,\infty}(\Sn)}\,I_1(|\nabla f|)\bigr\|_{L^{q(\cdot)}(\mathbb R^n)}\\
&=
c_n\,\|\Omega\|_{L^{n,\infty}(\Sn)}\,
\bigl\|I_1(|\nabla f|)\bigr\|_{L^{q(\cdot)}(\mathbb R^n)}.
\end{align*}
Since $p(\cdot)$ satisfies \eqref{eq:log-Holder} with $1<p_-\le p_+<n$ and
$\frac1{q(\cdot)}=\frac1{p(\cdot)}-\frac1n$, the Riesz potential satisfies the
variable-exponent Sobolev inequality
\begin{equation*}
\|I_1h\|_{L^{q(\cdot)}(\mathbb R^n)}\le C\,\|h\|_{L^{p(\cdot)}(\mathbb R^n)},
\qquad h\in L^{p(\cdot)}(\mathbb R^n),
\end{equation*}
with $C$ depending only on $n$ and on $p(\cdot)$; see \cite[Chapter~5]{CUF} and
\cite[Chapter~6]{DHHR}. Applying this inequality with $h=|\nabla f|$ and combining
the two previous equations, we conclude that
\begin{equation*}
\|T^*_\Omega f\|_{L^{q(\cdot)}(\mathbb R^n)}
\le c_n\,C\,\|\Omega\|_{L^{n,\infty}(\Sn)}\,\|\nabla f\|_{L^{p(\cdot)}(\mathbb R^n)}.
\end{equation*}
Thus the proof is complete.
\end{proof}

\begin{theorem}[Subcritical regime]
\label{thm:variable-subcritical}
Let $n\ge2$, let $\Omega$ satisfy \eqref{eq:standing-Omega}, and let
$s$ satisfy \eqref{eq:standing-s}. Let $p(\cdot)$ be globally
log-H\"older continuous with
\[
s<p_-\le p_+<n,
\]
and define $q(\cdot)$ by
\[
\frac1{q(x)}=\frac1{p(x)}-\frac1n.
\]
Then, for every $f\in C_c^\infty(\mathbb R^n)$,
\[
\|T^\ast_\Omega f\|_{L^{q(\cdot)}(\mathbb R^n)}
\le
C\,\|\Omega\|_{L^\rho(\Sn)}
\|\nabla f\|_{L^{p(\cdot)}(\mathbb R^n)},
\]
where $C$ depends on $n$, $\rho$, $s$ and $p(\cdot)$.
\end{theorem}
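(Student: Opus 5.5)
Apply Theorem~\ref{thm:main} with parameter $s$, so that
\[
T^\ast_\Omega f\le c(n,\rho,s)\,\|\Omega\|_{L^\rho(\Sn)}\sum_{t\in\{0,1/3\}^n} I^{\mathcal S_t}_{1,L^s}(|\nabla f|)
\]
pointwise. By the lattice property of the Luxemburg norm and the triangle inequality in $L^{q(\cdot)}$, it suffices to prove
\[
\bigl\|I^{\mathcal S}_{1,L^s}h\bigr\|_{L^{q(\cdot)}}\le C\,\|h\|_{L^{p(\cdot)}}
\]
with $C$ independent of the sparse family $\mathcal S$. For $h=|\nabla f|\in L^\infty_c$ this gives the claim after summing over the $2^n$ grids.

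To prove the uniform sparse bound, I would reduce to a Riesz potential through the rescaling already used in Section~\ref{sec:weighted}. For a sparse family $\mathcal S$ and $h\ge0$, the pointwise inequality
\[
I^{\mathcal S}_{1,L^s}h\le C\bigl(I^{\mathcal S}_{s,L^1}(h^s)\bigr)^{1/s}
\]
is false termwise in general, since an $\ell^1$ sum is not dominated by an $\ell^s$ sum. I would therefore avoid it and use off-diagonal Rubio de Francia extrapolation in variable Lebesgue spaces (Cruz-Uribe--Fiorenza, see \cite{CUF}, or \cite{CMP}). Set $\bar p(\cdot)=p(\cdot)/s$ and $\bar q(\cdot)=q(\cdot)/s$, so that
\[
\frac1{\bar p}-\frac1{\bar q}=\frac sn,\qquad \bar p_->1.
\]
The weighted input is exactly Theorem~\ref{thm:weighted-extrapolated}'s ingredient \cite[Theorem~5.2]{HMP_funct_anal}: for every pair $p_0,q_0$ with $s<p_0<n$, $\frac1{p_0}-\frac1{q_0}=\frac1n$, and every $w$ with $w^s\in A_{p_0/s,q_0/s}$, one has
\[
\|I^{\mathcal S}_{1,L^s}h\|_{L^{q_0}(w^{q_0})}\le C\|h\|_{L^{p_0}(w^{p_0})},
\]
with $C$ independent of $\mathcal S$. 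Writing this for the pairs $\bigl((I^{\mathcal S}_{1,L^s}h)^s,h^s\bigr)$ and $v=w^s\in A_{\bar p_0,\bar q_0}$ puts it in the standard off-diagonal form with gain $s/n$. This is the same device as in the proof of Corollary~\ref{cor:vector-valued}.

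The variable-exponent off-diagonal extrapolation theorem requires that the maximal operator be bounded on an auxiliary space built from $\bar q(\cdot)$; typically one needs $M$ bounded on $L^{(\bar q(\cdot)/\bar q_0)'}$ for a suitable $\bar q_0$. Since $p(\cdot)$ is globally log-H\"older and $p_+<n$, both $q(\cdot)$ and $\bar q(\cdot)$ are log-H\"older with $\bar q_->1$ and $\bar q_+<\infty$. The maximal-operator condition then follows from \cite[Chapter~5]{CUF} once $\bar q_0$ is chosen small enough, namely $1<\bar q_0<\bar q_-$ with $\bar q_0$ corresponding to some $\bar p_0\in(1,n/s)$. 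Extrapolation then yields
\[
\|(I^{\mathcal S}_{1,L^s}h)^s\|_{L^{\bar q(\cdot)}}\le C\|h^s\|_{L^{\bar p(\cdot)}},
\]
and $\|g^s\|_{L^{r(\cdot)/s}}=\|g\|_{L^{r(\cdot)}}^s$ finishes the argument.

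I expect the main obstacle to be checking the precise hypotheses of the off-diagonal variable extrapolation theorem. These are the admissible range of $\bar p_0$ relative to $\bar p_-$, the boundedness of $M$ on the required dual-type space, and the fact that the extrapolation constant depends only on the weight characteristic, so that uniformity in $\mathcal S$ survives. If this proves delicate, I would fall back on the route through the fractional maximal operator. The inequality \eqref{eq:sparse-vs-max} extrapolates, via \cite{CMP}, to
\[
\|I^{\mathcal S}_{1,L^s}h\|_{L^{q(\cdot)}}\lesssim\|M_{1,L^s}h\|_{L^{q(\cdot)}},
\]
and then
\[
M_{1,L^s}h=\bigl(M_s(h^s)\bigr)^{1/s}\le C\bigl(I_s(h^s)\bigr)^{1/s}.
\]
The variable Sobolev theorem for $I_s$ from $L^{\bar p(\cdot)}$ to $L^{\bar q(\cdot)}$, valid since $\bar p_+<n/s$, then closes the estimate.
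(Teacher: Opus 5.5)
Your proposal is correct. Your fallback is exactly the paper's proof. The paper extrapolates the $A_\infty$ comparison \eqref{eq:sparse-vs-max} to $L^{q(\cdot)}$ via \cite[Theorem~2.24]{CruzUribeWang}. It then uses $M_{1,L^s}g\le C(I_s(|g|^s))^{1/s}$ and the variable-exponent Sobolev theorem for $I_s$ from $L^{p(\cdot)/s}$ to $L^{q(\cdot)/s}$.

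Your primary route is genuinely different, and it closes without difficulty.
\begin{itemize}
\item Fix $s<p_0<p_-$. The off-diagonal theorem only needs the input for $v\in A_1$, with $v^{\bar p_0/\bar q_0}$ on the right. This is the case $w=v^{1/q_0}$ of \cite[Theorem~5.2]{HMP_funct_anal}, since $v\in A_1$ gives $w^s=v^{1/\bar q_0}\in A_{\bar p_0,\bar q_0}$ with a controlled constant.
\item The upper range condition $\bar p_+<\bar p_0\bar q_0/(\bar q_0-\bar p_0)=n/s$ is exactly $p_+<n$.
\item The choice $p_0<p_-$ gives $\inf q(\cdot)/q_0>1$. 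Hence $M$ is bounded on $L^{(q(\cdot)/q_0)'}$ by the log-H\"older theory.
\item Your version of the theorem may require $F\in L^{\bar q(\cdot)}$ a priori. This holds: for $h\in L^\infty_c$, $I^{\mathcal S}_{1,L^s}h$ is bounded and $O(|x|^{1-n/s})$ at infinity, and $q_->ns/(n-s)$.
\end{itemize}
So the checks you flagged as the main obstacle all go through.

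The two routes buy different things. The paper splits the problem in two. The first half is a sparse-to-maximal comparison that uses only $A_\infty$ information (\cite[Theorem~5.1]{HMP_funct_anal}) and no relation between the exponents, so it can be reused on any scale where $M$ behaves well. The second half is an unweighted gain of integrability supplied by $I_s$.

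Your route needs one off-diagonal extrapolation theorem and the same weighted input already used in Theorem~\ref{thm:weighted-extrapolated}. It dispenses with both $A_\infty$ extrapolation and the Riesz potential theorem on $L^{p(\cdot)}$. The hypotheses $s<p_-$ and $p_+<n$ also appear transparently as the extrapolation range.

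You could go further and apply the extrapolation directly to the pairs $\bigl(|T^\ast_\Omega f|^s,\|\Omega\|_{L^\rho(\Sn)}^s|\nabla f|^s\bigr)$, exactly as in the proof of Corollary~\ref{cor:vector-valued}. This would make the statement an immediate corollary of Theorem~\ref{thm:weighted-extrapolated}.
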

\begin{proof}
By \eqref{eq:sparse-vs-max}, for every sparse family $\mathcal S$,
every $r>0$ and every $v\in A_\infty$,
\[
\|I^{\mathcal S}_{1,L^s}g\|_{L^r(v)}
\le
C\,\|M_{1,L^s}g\|_{L^r(v)},
\]
with a constant independent of $\mathcal S$. 

Since $p(\cdot)$ is globally log-H\"older continuous and $p_+<n$,
the exponent $q(\cdot)$ is also globally log-H\"older continuous and
\[
1<q_-\le q_+<\infty.
\]
Hence the Hardy--Littlewood maximal operator is bounded on
$L^{q(\cdot)}$ and $L^{q'(\cdot)}$. Therefore, using 
\cite[Theorem~2.24]{CruzUribeWang}, gives
\begin{equation}\label{eq:variable-sparse-max}
\|I^{\mathcal S}_{1,L^s}g\|_{L^{q(\cdot)}}
\le
C\,\|M_{1,L^s}g\|_{L^{q(\cdot)}},
\end{equation}
uniformly in $\mathcal S$.

We also have
\[
M_{1,L^s}g
\le
C\bigl(I_s(|g|^s)\bigr)^{1/s}.
\]
Since
\[
\frac1{q(x)/s}
=
\frac1{p(x)/s}-\frac{s}{n},
\]
and
\[
1<\frac{p_-}{s}\le\frac{p_+}{s}<\frac ns,
\]
the variable-exponent estimate for $I_s$
\cite[Theorem~6.1.9]{DHHR} gives
\begin{align*}
\|M_{1,L^s}g\|_{L^{q(\cdot)}}
&\le
C\,
\|I_s(|g|^s)\|_{L^{q(\cdot)/s}}^{1/s}
\\
&\le
C\,
\||g|^s\|_{L^{p(\cdot)/s}}^{1/s}
\\
&=
C\,\|g\|_{L^{p(\cdot)}}.
\end{align*}
Together with \eqref{eq:variable-sparse-max}, this yields
\[
\|I^{\mathcal S}_{1,L^s}g\|_{L^{q(\cdot)}}
\le
C\,\|g\|_{L^{p(\cdot)}},
\]
uniformly in $\mathcal S$.

Finally, Theorem~\ref{thm:main}, applied with parameter $s$, gives
\[
T^\ast_\Omega f(x)
\le
c(n,s,\rho)\|\Omega\|_{L^\rho(\Sn)}
\sum_{t\in\{0,1/3\}^n}
I^{\mathcal S_t}_{1,L^s}(|\nabla f|)(x).
\]
Taking the $L^{q(\cdot)}$ norm and summing over the $2^n$ sparse
families gives
\[
\|T^\ast_\Omega f\|_{L^{q(\cdot)}}
\le
C\,\|\Omega\|_{L^\rho(\Sn)}
\|\nabla f\|_{L^{p(\cdot)}}.
\]
\end{proof}
\begin{remark}\label{rem:variable-scope}
The critical and subcritical estimates follow from different pointwise
bounds. In the critical case, $T^\ast_\Omega$ is controlled directly
by $I_1$. In the subcritical case, Theorem~\ref{thm:main} gives
sparse potentials $I^{\mathcal S}_{1,L^s}$. Their uniform comparison
with $M_{1,L^s}$, together with variable-exponent extrapolation and
the mapping properties of $I_s$, yields the preceding theorem.
\end{remark}

\section{Grand Lebesgue spaces}\label{sec:grand}

Grand Lebesgue spaces were introduced by Iwaniec and Sbordone \cite{IS} in connection with the integrability of the Jacobian: for a bounded domain $D\subset\mathbb R^n$ and $1<q<\infty$, the space $L^{q)}(D)$ consists of the measurable functions $g$ with
\[
\|g\|_{L^{q)}(D)}:=\sup_{0<\varepsilon<q-1}\Bigl(\varepsilon\int_{D}|g(x)|^{q-\varepsilon}\dd x\Bigr)^{\frac1{q-\varepsilon}}<\infty,
\]
and one has the continuous, strict embeddings $L^{q}(D)\hookrightarrow L^{q)}(D)\hookrightarrow L^{q-\varepsilon}(D)$ for every $0<\varepsilon<q-1$. On sets of infinite measure, additional control at infinity is needed.
Samko and Umarkhadzhiev
\cite{SamkoUmarkhadzhiev2011,Umarkhadzhiev2014} introduced a weighted
version with an auxiliary weight $a$, called the \emph{grandizer}. The grandizer controls the behaviour at infinity: given $1<q<\infty$ and weights $w,a$ on $\mathbb R^n$ assumed, as in \cite{JMSV}, to be positive almost everywhere such that $wa^{\varepsilon}\in L^1_{\mathrm{loc}}(\mathbb R^n)$ for all $\varepsilon>0$, the weighted grand Lebesgue space $L^{q)}_{a}(\mathbb R^n,w)$ is defined through the norm
\begin{equation}\label{eq:grand-def}
\|g\|_{L^{q)}_{a}(\mathbb R^n,w)}
:=\sup_{0<\varepsilon<q-1}\Bigl(\varepsilon\int_{\mathbb R^n}|g(x)|^{q-\varepsilon}\,w(x)\,a^{\varepsilon}(x)\dd x\Bigr)^{\frac1{q-\varepsilon}}.
\end{equation}
We refer to the recent paper of Jain, Molchanova, Singh and Vodopyanov \cite{JMSV} for a detailed study of these spaces and of the associated grand Sobolev spaces, including a Haj\l asz-type pointwise description of the latter. Three facts are relevant here. The norm \eqref{eq:grand-def} has the lattice property \cite[Proposition~4.8]{JMSV}. One has $L^{q}(\mathbb R^n,w)\hookrightarrow L^{q)}_{a}(\mathbb R^n,w)$ whenever $a\in L^{q}(\mathbb R^n,w)$ \cite{JMSV,Umarkhadzhiev2014}. The Hardy--Littlewood maximal operator is bounded on these spaces: if $w\in A_{q}(\mathbb R^n)$, $a^{\delta}\in A_{q}(\mathbb R^n)$ for some $\delta>0$, and $a\in L^{q}(\mathbb R^n,w)$, then
\begin{equation}\label{eq:grand-maximal}
\|Mg\|_{L^{q)}_{a}(\mathbb R^n,w)}\le K\,\|g\|_{L^{q)}_{a}(\mathbb R^n,w)}
\end{equation}
for all $g\in L^{q)}_{a}(\mathbb R^n,w)$, with $K$ independent of $g$; see \cite{Umarkhadzhiev2014} and \cite[Theorem~4.10]{JMSV}. As usual, $M$ may be taken indifferently over balls or over cubes, the two versions being pointwise comparable up to dimensional constants.

Since the Hedberg-type bound \eqref{eq:hedberg-M} controls $T^{\ast}_{\Omega}$ pointwise by a power of the Hardy--Littlewood maximal function, inequality \eqref{eq:grand-maximal} opens the way to an unconditional Sobolev-type inequality on the grand scale. This is the same mechanism that produces the refined inequalities of \cite{CMM_JMAA} on the Lebesgue, weighted Lebesgue, Orlicz and classical Lorentz scales. 

We first state an elementary rescaling property of the norms \eqref{eq:grand-def},
which we have not been able to locate in the literature in this generality; we
include the short proof.

\begin{lemma}\label{lem:grand-power}
Let $1<q<\infty$, let $w,a$ be weights on $\mathbb R^n$ with $wa^{\varepsilon}\in L^1_{\mathrm{loc}}(\mathbb R^n)$ for all $\varepsilon>0$ and $a\in L^{q}(\mathbb R^n,w)$, and let $\gamma>0$ satisfy $\gamma q>1$. Then, for every measurable function $h$,
\begin{equation}\label{eq:grand-power}
C^{-1}\,\|h\|^{\gamma}_{L^{\gamma q)}_{a^{1/\gamma}}(\mathbb R^n,w)}
\le
\bigl\||h|^{\gamma}\bigr\|_{L^{q)}_{a}(\mathbb R^n,w)}
\le
C\,\|h\|^{\gamma}_{L^{\gamma q)}_{a^{1/\gamma}}(\mathbb R^n,w)},
\end{equation}
where $C\ge1$ depends only on $\gamma$, $q$ and $\|a\|_{L^{q}(\mathbb R^n,w)}$.
\end{lemma}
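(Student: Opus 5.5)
The plan is a change of variables in the parameter $\varepsilon$, followed by a lemma saying that the range of $\varepsilon$ in \eqref{eq:grand-def} can be cut down at the cost of a constant depending on $\|a\|_{L^q(\mathbb R^n,w)}$.

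\textbf{Step 1: matching the two suprema.} Write out both norms. Since $|h|^{\gamma(q-\varepsilon)}=|h|^{\gamma q-\eta}$ and $a^{\varepsilon}=(a^{1/\gamma})^{\eta}$ for $\eta:=\gamma\varepsilon$, we get
\[
\Bigl(\varepsilon\int|h|^{\gamma(q-\varepsilon)}wa^{\varepsilon}\Bigr)^{\frac1{q-\varepsilon}}
=\gamma^{-\frac{\gamma}{\gamma q-\eta}}\Bigl[\Bigl(\eta\int|h|^{\gamma q-\eta}w\,(a^{1/\gamma})^{\eta}\Bigr)^{\frac1{\gamma q-\eta}}\Bigr]^{\gamma}.
\]
The prefactor $\gamma^{-\gamma/(\gamma q-\eta)}$ is bounded above and below by constants depending only on $\gamma$ and $q$. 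This holds because on the relevant ranges $\gamma q-\eta$ stays between $\min(\gamma,\gamma q-1)$, or the value at the chosen cutoff, and $\gamma q$, so the exponent stays bounded.

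The only mismatch is the parameter range. As $\varepsilon$ runs over $(0,q-1)$, the new variable $\eta$ runs over $(0,\gamma q-\gamma)$. The norm $\|h\|_{L^{\gamma q)}_{a^{1/\gamma}}}$, however, takes $\eta\in(0,\gamma q-1)$. These ranges differ, with the first smaller when $\gamma>1$ and larger when $\gamma<1$. Also $a\in L^q(w)$ is the same as $a^{1/\gamma}\in L^{\gamma q}(w)$, since $\int (a^{1/\gamma})^{\gamma q}w=\int a^q w$.

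\textbf{Step 2: reduction lemma.} Let $1<r<\infty$ and let $b$ be a weight with $b\in L^r(w)$. For any $0<\varepsilon_0<r-1$, I claim the sup in the $L^{r)}_b(w)$ norm over $\varepsilon\in(0,r-1)$ is bounded by $C(r,\varepsilon_0,\|b\|_{L^r(w)})$ times the sup over $\varepsilon\in(0,\varepsilon_0]$. The reverse inequality is trivial.

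To prove the claim, take $\varepsilon\in(\varepsilon_0,r-1)$ and set $\theta=(r-\varepsilon)/(r-\varepsilon_0)\in(0,1)$. Then
\[
|g|^{r-\varepsilon}b^{\varepsilon}=\bigl(|g|^{r-\varepsilon_0}b^{\varepsilon_0}\bigr)^{\theta}\bigl(b^{r}\bigr)^{1-\theta},
\]
which is checked by comparing exponents of $b$: $\varepsilon_0\theta+r(1-\theta)=\varepsilon$. H\"older's inequality in $L^1(w)$ with exponents $1/\theta$ and $1/(1-\theta)$ then gives
\[
\int|g|^{r-\varepsilon}b^{\varepsilon}w\le\Bigl(\int|g|^{r-\varepsilon_0}b^{\varepsilon_0}w\Bigr)^{\theta}\|b\|_{L^r(w)}^{r(1-\theta)}.
\]
Raising to the power $1/(r-\varepsilon)$ turns the first factor into $\bigl(\int|g|^{r-\varepsilon_0}b^{\varepsilon_0}w\bigr)^{1/(r-\varepsilon_0)}$. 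This is at most $\varepsilon_0^{-1/(r-\varepsilon_0)}$ times the restricted sup. The remaining factors $\varepsilon^{1/(r-\varepsilon)}$ and $\|b\|^{r(1-\theta)/(r-\varepsilon)}$ are bounded uniformly in $\varepsilon\in(\varepsilon_0,r-1)$.

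\textbf{Step 3: conclusion.} Apply the lemma on the side with the larger parameter range, taking $\varepsilon_0$ to be the endpoint of the smaller range, or a fixed fraction of it, so that $\varepsilon_0<r-1$ strictly.
\begin{itemize}
\item If $\gamma>1$, use $r=\gamma q$, $b=a^{1/\gamma}$ and cutoff $\eta_0=\gamma(q-1)/2$.
\item If $\gamma<1$, use $r=q$, $b=a$ and cutoff $\varepsilon_0=(\gamma q-1)/(2\gamma)$.
\end{itemize}
On the common range, Step 1 compares the two quantities pointwise in the parameter. Combining the two steps yields \eqref{eq:grand-power}, with $C$ depending on $\gamma$, $q$ and $\|a\|_{L^q(w)}$.

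I expect the main obstacle to be Step 2, specifically getting the interpolation exponent right so that the weight power is exactly $b^{\varepsilon}$. This is where the hypothesis $a\in L^q(\mathbb R^n,w)$ is genuinely used. The remaining work is bookkeeping of the uniform bounds on the prefactors.
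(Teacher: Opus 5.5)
Your proof is correct and follows essentially the same route as the paper: the substitution $\eta=\gamma\varepsilon$ with the bounded prefactor $\gamma^{-\gamma/(\gamma q-\eta)}$, followed by a range-reduction estimate proved by H\"older's inequality against $b^{r}w$ (that is, $a^{q}w$), which is exactly where $a\in L^q(\mathbb R^n,w)$ enters. The only difference is bookkeeping: the paper stays in the single variable $\eta$ with $R=\gamma q$, $b=a^{1/\gamma}$, and proves $N_{\sigma_2}(h)\le C\,N_{\sigma_1}(h)$ for all $0<\sigma_1\le\sigma_2<R$ (cutoff $\sigma_1/2$), which covers $\gamma>1$ and $\gamma<1$ at once, whereas you split into cases and apply your lemma in whichever space has the longer parameter range ($\gamma=1$ being trivial).
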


\begin{proof}
Set $R:=\gamma q>1$ and $b:=a^{1/\gamma}$, so that $\int_{\mathbb R^n}b^{R}w\dd x=\int_{\mathbb R^n}a^{q}w\dd x<\infty$, and for $0<\sigma<R$ put
\[
N_{\sigma}(h):=\sup_{0<\eta<\sigma}\Bigl(\eta\int_{\mathbb R^n}|h|^{R-\eta}\,w\,b^{\eta}\dd x\Bigr)^{\frac1{R-\eta}},
\]
so that $N_{R-1}(h)=\|h\|_{L^{R)}_{b}(\mathbb R^n,w)}$. The substitution $\varepsilon=\eta/\gamma$ in the middle term of \eqref{eq:grand-power} gives
\[
\bigl\||h|^{\gamma}\bigr\|_{L^{q)}_{a}(\mathbb R^n,w)}
=\Bigl[\sup_{0<\eta<\gamma(q-1)}\Bigl(\frac{\eta}{\gamma}\int_{\mathbb R^n}|h|^{R-\eta}\,w\,b^{\eta}\dd x\Bigr)^{\frac1{R-\eta}}\Bigr]^{\gamma}.
\]
On the ranges $0<\eta<\gamma(q-1)$ and $0<\eta<R-1$ one has $R-\eta\ge\min\{\gamma,1\}$, so the factor $\gamma^{-1/(R-\eta)}$ is bounded above and below by positive constants depending only on $\gamma$ and $q$; the claim therefore reduces to comparing $N_{\gamma(q-1)}(h)$ with $N_{R-1}(h)$. Since both $\gamma(q-1)$ and $R-1$ lie in $(0,R)$, it suffices to prove that
\begin{equation}\label{eq:range-restriction}
N_{\sigma_2}(h)\le C\,N_{\sigma_1}(h)
\qquad\text{whenever }0<\sigma_1\le\sigma_2<R,
\end{equation}
with $C=C\bigl(R,\sigma_1,\sigma_2,\int b^{R}w\bigr)$, the reverse inequality being trivial. Set $\eta_0:=\sigma_1/2$. For $\eta\in[\sigma_1,\sigma_2)$, H\"older's inequality with the exponents $\frac{R-\eta_0}{R-\eta}$ and $\frac{R-\eta_0}{\eta-\eta_0}$, applied to the pointwise factorization
\[
|h|^{R-\eta}\,w\,b^{\eta}=\bigl(|h|^{R-\eta_0}\,w\,b^{\eta_0}\bigr)^{\frac{R-\eta}{R-\eta_0}}\,\bigl(b^{R}\,w\bigr)^{\frac{\eta-\eta_0}{R-\eta_0}},
\]
yields
\[
\int_{\mathbb R^n}|h|^{R-\eta}\,w\,b^{\eta}\dd x
\le\Bigl(\int_{\mathbb R^n}|h|^{R-\eta_0}\,w\,b^{\eta_0}\dd x\Bigr)^{\frac{R-\eta}{R-\eta_0}}
\Bigl(\int_{\mathbb R^n}b^{R}\,w\dd x\Bigr)^{\frac{\eta-\eta_0}{R-\eta_0}}.
\]
Raising this to the power $\frac1{R-\eta}$, multiplying by $\eta^{1/(R-\eta)}$, and noting that for $\eta\in[\sigma_1,\sigma_2)$ the quantities $\eta^{1/(R-\eta)}$, $\eta_0^{-1/(R-\eta_0)}$ and the resulting power of $\int b^{R}w$ are bounded by constants of the admissible form (here $R-\eta\ge R-\sigma_2>0$ is used), we obtain
\[
\Bigl(\eta\int_{\mathbb R^n}|h|^{R-\eta}\,w\,b^{\eta}\dd x\Bigr)^{\frac1{R-\eta}}
\le C\,\Bigl(\eta_0\int_{\mathbb R^n}|h|^{R-\eta_0}\,w\,b^{\eta_0}\dd x\Bigr)^{\frac1{R-\eta_0}}
\le C\,N_{\sigma_1}(h),
\]
because $\eta_0<\sigma_1$. Together with the trivial bound for the terms with $\eta\in(0,\sigma_1)$, this proves \eqref{eq:range-restriction} and hence the lemma.
\end{proof}

\begin{theorem}[A Hedberg-type Sobolev inequality in weighted grand Lebesgue spaces]\label{thm:grand}
Let $n\ge2$, let $\Omega$ satisfy \eqref{eq:standing-Omega}, let $\widetilde\rho\le p<n$ and $p<\beta<n$, and set $\theta:=1-\frac p\beta\in(0,1)$. Let $q\in(1,\infty)$ satisfy
\[
\frac1q<\frac1p-\frac1\beta,
\]
and let $w,a$ be weights on $\mathbb R^n$ such that $wa^{\varepsilon}\in L^1_{\mathrm{loc}}(\mathbb R^n)$ for all $\varepsilon>0$ and
\begin{equation}\label{eq:grand-hypotheses}
w\in A_{\theta q/p}(\mathbb R^n),
\qquad
a^{\delta}\in A_{\theta q/p}(\mathbb R^n)\ \text{for some }\delta>0,
\qquad
a\in L^{q}(\mathbb R^n,w).
\end{equation}
Then, for every $f\in C_c^\infty(\mathbb R^n)$,
\begin{equation}\label{eq:grand-main}
\|T^{\ast}_{\Omega}f\|_{L^{q)}_{a}(\mathbb R^n,w)}
\le
C\,\|\Omega\|_{L^\rho(\Sn)}\,
\|\nabla f\|_{\Mor{p}{pn/\beta}(\mathbb R^n)}^{\frac p\beta}\,
\|\nabla f\|_{L^{\theta q)}_{a^{1/\theta}}(\mathbb R^n,w)}^{\theta},
\end{equation}
where $C$ depends only on $n$, $p$, $q$, $\rho$, $\beta$ and on the weights through the quantities in \eqref{eq:grand-hypotheses}. In particular, the same bound holds for $T_{\Omega}$.
\end{theorem}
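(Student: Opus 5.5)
We start from the Hedberg-type bound \eqref{eq:hedberg-M} of Remark~\ref{rem:hedberg-M}. Writing $\theta=1-\frac p\beta$, it reads
\[
T^\ast_\Omega f\le c\,\|\Omega\|_{L^\rho(\Sn)}\,\|\nabla f\|_{\Mor{p}{pn/\beta}}^{p/\beta}\,\bigl(M(|\nabla f|^p)\bigr)^{\theta/p}.
\]
By the lattice property of the grand norm \cite[Proposition~4.8]{JMSV}, and since the Morrey factor is a constant, the problem reduces to estimating
\[
\bigl\|(M(|\nabla f|^p))^{\theta/p}\bigr\|_{L^{q)}_a(\mathbb R^n,w)}.
\]

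We apply Lemma~\ref{lem:grand-power} twice. For the first application take $\gamma=\theta/p$ and exponent $Q:=\theta q/p$, so that $\gamma Q\cdot(p/\theta)=q$. Precisely, we apply the lemma with $h=M(|\nabla f|^p)$, base exponent $Q$, power $\gamma'=p/\theta$ and grandizer $a^{\theta/p}$. This requires checking that $Q>1$, which is exactly the hypothesis $\frac1q<\frac1p-\frac1\beta=\frac\theta p$. It also requires $a^{\theta/p}\in L^{Q}(w)$, which holds because $(a^{\theta/p})^{Q}=a^{\theta^2q/p^2}$. We therefore need to be careful about which grandizer appears: we want the final form to involve $a^{1/\theta}$ on $\nabla f$ with exponent $\theta q$.

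Rather than guessing, we set the chain up directly. Put $G:=M(|\nabla f|^p)$.

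\textbf{Step 1.} Write $G^{\theta/p}=(G)^{\gamma}$ with $\gamma=\theta/p$. Lemma~\ref{lem:grand-power}, applied with exponent $Q=q/\gamma\cdot$ suitably, gives
\[
\|G^{\gamma}\|_{L^{q)}_a(w)}\approx\|G\|^{\gamma}_{L^{Q)}_{b}(w)},\qquad Q=\gamma q\ \text{(exponent of $G$)}.
\]
Here we read Lemma~\ref{lem:grand-power} from right to left, with base exponent $Q$, power $1/\gamma$ and grandizer $b$ such that $b^{\gamma}$ relates to $a$. The integrability conditions reduce to $a\in L^q(w)$ and $Q=\theta q/p>1$.

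\textbf{Step 2.} Apply the grand maximal bound \eqref{eq:grand-maximal} in $L^{Q)}_b(w)$. This needs $w\in A_Q=A_{\theta q/p}$ and $b^{\delta'}\in A_Q$ for some $\delta'>0$. Both are exactly the hypotheses \eqref{eq:grand-hypotheses}, once $b$ is a power of $a$: powers of $a$ just rescale $\delta$. We also need $b\in L^Q(w)$, which again reduces to $a\in L^q(w)$.

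\textbf{Step 3.} Use Lemma~\ref{lem:grand-power} once more to convert $\||\nabla f|^p\|^{\gamma}_{L^{Q)}_b(w)}$ into $\|\nabla f\|^{p\gamma}_{L^{pQ)}_{b^{1/p}}(w)}$. This equals $\|\nabla f\|^{\theta}_{L^{\theta q)}_{a^{1/\theta}}(w)}$ provided the grandizers compose as $b^{1/p}=a^{1/\theta}$, that is $b=a^{p/\theta}$. It remains to check that Step~1 produces this $b$. In Step~1, Lemma~\ref{lem:grand-power} with base $Q$, power $p/\theta$ and grandizer $b$ relates $\|G^{\theta/p}\|$ in $L^{(p/\theta)Q)}_{b^{\theta/p}}=L^{q)}_{a}$. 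This is consistent with $b=a^{p/\theta}$, and the $L^q$ condition $\int b^{Q}w=\int a^{q}w$ holds. Likewise $b^{\delta p/\theta}=a^\delta\in A_Q$.

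The main obstacle is this bookkeeping of exponents and grandizers across the two uses of Lemma~\ref{lem:grand-power}. Its hypotheses ask for the base exponent to exceed $1$ and for $\gamma q>1$. In Step~1 these read $Q>1$ and $q>1$. In Step~3 they read $Q>1$ and $\theta q>1$, which follows from $Q>1$ since $p\ge1$. They also require the grandizer integrability, which is $a^{p/\theta}\in L^{Q}(w)$, equivalent to $a\in L^q(w)$. The constants depend only on the listed quantities, and the final statement for $T_\Omega$ follows from $|T_\Omega f|\le T^\ast_\Omega f$ and the lattice property.
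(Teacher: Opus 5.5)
Your Steps~1--3 are exactly the paper's argument: starting from \eqref{eq:hedberg-M} and the lattice property, Lemma~\ref{lem:grand-power} with $\gamma=\theta/p$ passes to $L^{\theta q/p)}_{a^{p/\theta}}(\mathbb R^n,w)$. There \eqref{eq:grand-maximal} applies, with $\delta'=\theta\delta/p$ and $\int (a^{p/\theta})^{\theta q/p}w=\int a^q w<\infty$, and Lemma~\ref{lem:grand-power} with $\gamma=p$ then gives $\|\nabla f\|^{\theta}_{L^{\theta q)}_{a^{1/\theta}}(\mathbb R^n,w)}$.

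Two corrections to the write-up. Delete the discarded opening paragraph: the identity $\gamma Q\cdot(p/\theta)=q$, the grandizer $a^{\theta/p}$, and the claim that $a^{\theta/p}\in L^{Q}(w)$ are all wrong. Also, in the consistency check after Step~3 the exponent should read $b^{\delta\theta/p}=a^{\delta}$, not $b^{\delta p/\theta}$.
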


\begin{proof}
Observe first that the condition $\frac1q<\frac1p-\frac1\beta=\frac{\theta}{p}$ amounts to $\frac{\theta q}{p}>1$. Our starting point is the pointwise estimate
\eqref{eq:hedberg-M}:
\begin{equation*}
T^{\ast}_{\Omega}f(x)
\le
c\,\|\Omega\|_{L^\rho(\Sn)}\,
\bigl(M(|\nabla f|^{p})(x)\bigr)^{\frac{\theta}{p}}\,
\|\nabla f\|_{\Mor{p}{pn/\beta}(\mathbb R^n)}^{\frac p\beta},
\qquad x\in\mathbb R^n.
\end{equation*}
For $0<\varepsilon<q-1$, we obtain
\begin{align*}
\Bigl(\varepsilon\int_{\mathbb R^n}
\bigl(T^{\ast}_{\Omega}f(x)\bigr)^{q-\varepsilon}\,w(x)\,a^{\varepsilon}(x)\dd x\Bigr)^{\frac1{q-\varepsilon}}
&\le
c\,\|\Omega\|_{L^\rho(\Sn)}\,\|\nabla f\|_{\Mor{p}{pn/\beta}(\mathbb R^n)}^{\frac p\beta}\times\\
&\Bigl(\varepsilon\int_{\mathbb R^n}
\bigl(M(|\nabla f|^{p})(x)\bigr)^{\frac{\theta}{p}(q-\varepsilon)}\,w(x)\,a^{\varepsilon}(x)\dd x\Bigr)^{\frac1{q-\varepsilon}},
\end{align*}
and taking the supremum over $0<\varepsilon<q-1$,
\begin{equation*}
\|T^{\ast}_{\Omega}f\|_{L^{q)}_{a}(\mathbb R^n,w)}
\le
c\,\|\Omega\|_{L^\rho(\Sn)}\,\|\nabla f\|_{\Mor{p}{pn/\beta}(\mathbb R^n)}^{\frac p\beta}\,
\bigl\|\bigl(M(|\nabla f|^{p})\bigr)^{\frac{\theta}{p}}\bigr\|_{L^{q)}_{a}(\mathbb R^n,w)}.
\end{equation*}
By Lemma~\ref{lem:grand-power}, applied with $\gamma=\frac{\theta}{p}$, 
\begin{equation*}
\bigl\|\bigl(M(|\nabla f|^{p})\bigr)^{\frac{\theta}{p}}\bigr\|_{L^{q)}_{a}(\mathbb R^n,w)}
\le
C\,\bigl\|M(|\nabla f|^{p})\bigr\|^{\frac{\theta}{p}}_{L^{\theta q/p)}_{a^{p/\theta}}(\mathbb R^n,w)}.
\end{equation*}
The maximal inequality \eqref{eq:grand-maximal} may be applied in the space
$L^{\theta q/p)}_{a^{p/\theta}}(\mathbb R^n,w)$: indeed, $w\in A_{\theta q/p}(\mathbb R^n)$
by \eqref{eq:grand-hypotheses}, while
\begin{equation*}
\bigl(a^{p/\theta}\bigr)^{\frac{\theta\delta}{p}}
=a^{\delta}\in A_{\theta q/p}(\mathbb R^n)
\qquad\text{and}\qquad
\int_{\mathbb R^n}\bigl(a^{p/\theta}\bigr)^{\frac{\theta q}{p}}\,w\dd x
=\int_{\mathbb R^n}a^{q}\,w\dd x<+\infty.
\end{equation*}
Together with a second application of Lemma~\ref{lem:grand-power}, now with
$\gamma=p$,  this gives
\begin{equation*}
\bigl\|M(|\nabla f|^{p})\bigr\|_{L^{\theta q/p)}_{a^{p/\theta}}(\mathbb R^n,w)}
\le
K\,\bigl\||\nabla f|^{p}\bigr\|_{L^{\theta q/p)}_{a^{p/\theta}}(\mathbb R^n,w)}
\le
C\,K\,\|\nabla f\|^{p}_{L^{\theta q)}_{a^{1/\theta}}(\mathbb R^n,w)}.
\end{equation*}
Combining the three previous displays, we conclude that
\begin{equation*}
\|T^{\ast}_{\Omega}f\|_{L^{q)}_{a}(\mathbb R^n,w)}
\le
C\,\|\Omega\|_{L^\rho(\Sn)}\,
\|\nabla f\|_{\Mor{p}{pn/\beta}(\mathbb R^n)}^{\frac p\beta}\,
\|\nabla f\|_{L^{\theta q)}_{a^{1/\theta}}(\mathbb R^n,w)}^{\theta},
\end{equation*}
which is \eqref{eq:grand-main}; the statement for $T_{\Omega}$ follows from the
pointwise inequality $|T_{\Omega}f|\le T^{\ast}_{\Omega}f$.
\end{proof}
\begin{remark}\label{rem:grand-scope}
The hypotheses \eqref{eq:grand-hypotheses} are satisfied by natural families of
weights. Let, for instance, $w\equiv1$ and $a(x)=(1+|x|)^{-\nu}$ with
$\frac nq<\nu<n$: the constant weight belongs to every Muckenhoupt class;
$a\in L^{q}(\mathbb R^n)$ precisely because $\nu q>n$; the function
$a^{\delta}=(1+|x|)^{-\nu\delta}$ is an $A_1$ weight for $0<\delta<\frac n\nu$, and
hence belongs in particular to $A_{\theta q/p}(\mathbb R^n)$; and
$wa^{\varepsilon}=(1+|x|)^{-\nu\varepsilon}$ is bounded, hence locally integrable,
for every $\varepsilon>0$. Since $a^{1/\theta}(x)=(1+|x|)^{-\nu/\theta}$, the
inequality \eqref{eq:grand-main} then reads (writing $L^{q)}_{a}(\mathbb R^n)$ for
$L^{q)}_{a}(\mathbb R^n,w)$ when $w\equiv1$)
\[
\|T^{\ast}_{\Omega}f\|_{L^{q)}_{(1+|x|)^{-\nu}}(\mathbb R^n)}
\le
C\,\|\Omega\|_{L^\rho(\Sn)}\,
\|\nabla f\|_{\Mor{p}{pn/\beta}(\mathbb R^n)}^{\frac p\beta}\,
\|\nabla f\|_{L^{\theta q)}_{(1+|x|)^{-\nu/\theta}}(\mathbb R^n)}^{\theta},
\]
a refined Sobolev inequality between grand Lebesgue spaces on $\mathbb R^n$ with
power-type grandizers.

Theorem~\ref{thm:grand} follows from the Hedberg-type estimate
\eqref{eq:hedberg-M} and the maximal inequality
\eqref{eq:grand-maximal}. No estimate for the sparse potentials
$I^{\mathcal S}_{1,L^p}$ on grand Lebesgue spaces is needed. The
Morrey factor in \eqref{eq:grand-main} comes from
\eqref{eq:hedberg-M}.

In the critical case, the pointwise estimate
\eqref{eq:HMP-critical} involves the Riesz potential $I_1$ directly.
Mapping properties of Riesz potentials on grand Lebesgue spaces were
studied by Samko and Umarkhadzhiev
\cite{SamkoUmarkhadzhiev2016}. We do not consider this case here.

On a bounded domain $D\subset\mathbb R^n$, the Hardy--Littlewood
maximal operator is bounded on $L^{q)}(D)$
\cite[Proposition~4.12]{JMSV}. Corresponding local estimates may be
obtained after a suitable localization of the maximal operator. We do
not pursue this question here.
\end{remark}
\noindent {\bf Acknowledgment.} This work was supported by the GDRI ECO-Math.


\end{document}